\documentclass[11pt]{amsart}

\textheight22cm
\voffset-1.5cm
\textwidth15cm
\hoffset-1cm

\usepackage[T1]{fontenc}
\usepackage[utf8]{inputenc}
\usepackage[english]{babel}
\usepackage{mathtools}
\usepackage{amsfonts}
\usepackage{amsthm}
\usepackage{comment}
\usepackage{amsmath}
\usepackage{xcolor}
\usepackage{cite}
\usepackage{braket}
\usepackage{amssymb}

\bibliographystyle{siam}

\numberwithin{equation}{section}
\theoremstyle{plain}
\newtheorem{theo}{Theorem}[section]

\newtheorem{lemma}[theo]{Lemma}
\newtheorem{coro}[theo]{Corollary}

\newtheorem{question}{Question}
\theoremstyle{definition}
\newtheorem{rem}[theo]{Remark}
\newtheorem{example}[theo]{Example}

\newcommand{\N}{\mathbb{N}}
\newcommand{\C}{\mathbb{C}}
\newcommand{\D}{\mathbb{D}}
\newcommand{\T}{\mathbb{T}}
\newcommand{\B}{\mathcal{B}}
\newcommand{\dyad}{\mathcal{D}}

\DeclareMathOperator{\mre}{Re}

\DeclareMathOperator*{\BMO}{BMO}
\DeclareMathOperator*{\BMOA}{BMOA}

%Definition of average integral symbol
\def\Xint#1{\mathchoice
{\XXint\displaystyle\textstyle{#1}}%
{\XXint\textstyle\scriptstyle{#1}}%
{\XXint\scriptstyle\scriptscriptstyle{#1}}%
{\XXint\scriptscriptstyle\scriptscriptstyle{#1}}%
\!\int}
\def\XXint#1#2#3{{\setbox0=\hbox{$#1{#2#3}{\int}$ }
\vcenter{\hbox{$#2#3$ }}\kern-.6\wd0}}
\def\dashint{\Xint-}

\usepackage{hyperref}

%For some drawings
\usepackage{tikz}
\usepackage{ifthen}
\usepackage{xfp}
\usetikzlibrary{calc}

\title{Restrictions of B\'ekollé--Bonami weights and Bloch functions}

\author[A. Dayan]{Alberto Dayan}
\address{Departament de Matemàtiques, Universitat Autònoma de Barcelona, 08193, Barcelona, Spain}
\email{alberto.dayan@uab.cat}

\author[A. Llinares]{Adrián Llinares}
\address{Department of Mathematics and Mathematical Statistics, Ume{\aa} University, SE-90736 Ume{\aa}, Sweden}
\curraddr{Departamento de Matemáticas, Universidad Autónoma de Madrid, 28049 Madrid, Spain}
\email{adrian.llinares@uam.es}

\author[K-M. Perfekt]{Karl-Mikael Perfekt}
\address{Department of Mathematical Sciences, Norwegian University of Science and Technology, 7491 Trondheim, Norway}
\email{karl-mikael.perfekt@ntnu.no}

\date{\today}

\keywords{Békollé--Bonami weights, Rubio de Francia factorization, Bloch space, interpolating sequences}

\subjclass[2020]{46E30 (Primary); 30E05, 30H30, 42A61, 47B38 (Secondary)}

\begin{document}

    \begin{abstract}
        We characterize the restrictions of Békollé--Bonami weights of bounded hyperbolic oscillation to subsets of the unit disc, thus proving an analogue of Wolff's restriction theorem for Muckenhoupt weights. Sundberg proved a discrete version of Wolff's original theorem, by characterizing the trace of $\BMO$-functions onto interpolating sequences. We consider an analogous question in our setting, by studying the trace of Bloch functions. Through Makarov's probabilistic approach to the Bloch space, our question can be recast as a restriction problem for Bloch dyadic martingales on the unit circle.
    \end{abstract}

    \maketitle

    \section{Introduction}
    
        For $1 < p < \infty$, a positive measurable weight $w$ on the unit circle $\mathbb{T}$ is said to be a Muckenhoupt $A_p$-weight if 
            \[
                [w]_{A_p} := \sup_{I \subset \mathbb{T}} \; \dashint_I  w \left( \dashint_I  w^{-\frac{1}{p-1}} \right)^{p-1} < \infty,
            \]
            where the supremum is taken over all subarcs $I$ of the unit circle, $|I|$ denotes the normalized arc length of $I$ and
            \[
                \dashint_I w :=\frac{1}{|I|}\int_{I} w(\theta) \frac{d\theta}{2\pi}
            \]
            denotes the average of $w$ on it. When $p=1$, we say that $w \in A_1$ if there is a minimal constant $C = [w]_{A_1}$ such that $\mathcal{M}w(\theta) \leq Cw(\theta)$ for a.e. $\theta \in \mathbb{T}$, where $\mathcal{M}w$ is the Hardy--Littlewood maximal function of $w$, 
            \[
                \mathcal{M}w(\theta) = \sup_{I \ni \theta} \; \dashint_I w.
            \]
            Muckenhoupt weights are extremely important, because, for $1 < p < \infty$, it holds that $w \in A_p$ if and only if the maximal function $\mathcal{M}$ is bounded on $L^p(\mathbb{T}, w\, d\theta)$, and, furthermore, if and only if the Hilbert transform is bounded on $L^p(\mathbb{T}, w\, d\theta)$. In a famous unpublished preprint, circa 1980, Wolff characterized the restrictions of $A_p$-weights to subsets of the unit circle.
    
        \begin{theo}[Wolff, as cited in \cite{Garcia-CuervaRubiodeFrancia}] \label{thm:wolff}
        	Let $w \geq 0$ be a weight on a measurable subset $\Omega \subset \mathbb{T}$, and let $1 \leq p < \infty$.  Then the following are equivalent.
        	            \begin{enumerate}
        	            			\item There exists $W \in A_p$ such that $w(\theta) = W(\theta)$ for almost every $\theta \in \Omega$.
        		\item There exists $q > 1$ such that $w^q \in A_{p, \Omega}$. That is, if $p > 1$, it holds that
        		\[\sup_{I \subset \mathbb{T}} \left( \frac{1}{|I|}\int_{I \cap \Omega}  w^q \right) \left( \frac{1}{|I|}\int_{I \cap \Omega} w^{-\frac{q}{p-1}} \right)^{p-1} < \infty,\]
        	   and similarly for $p=1$.
        	\end{enumerate}
        \end{theo}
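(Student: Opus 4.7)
The plan is to prove the two implications separately.

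For $(1) \Rightarrow (2)$, I would invoke the standard self-improvement of Muckenhoupt weights. The reverse Hölder inequality, applied both to $W$ and to its dual weight $W^{-1/(p-1)}$, produces an exponent $q > 1$ for which $W^q$ still satisfies the full $A_p$ condition on $\mathbb{T}$. The restricted $A_{p,\Omega}$ condition then follows immediately from the monotonicity of the integral: restricting from $I$ to $I \cap \Omega$ only decreases each of the two factors in the supremum.

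For $(2) \Rightarrow (1)$, the strategy is to combine the Coifman--Rochberg construction of $A_1$-weights with Jones factorization, reducing the general case to $p = 1$. When $p = 1$, the hypothesis $w^q \in A_{1,\Omega}$ is equivalent to the pointwise bound $\mathcal{M}(w^q \chi_\Omega)(\theta) \leq C w(\theta)^q$ for a.e.\ $\theta \in \Omega$. Setting
\[
\widetilde{W}(\theta) := (\mathcal{M}(w^q \chi_\Omega)(\theta))^{1/q}, \quad \theta \in \mathbb{T},
\]
the Coifman--Rochberg theorem (applicable since $1/q \in (0,1)$) yields $\widetilde{W} \in A_1$. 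Since $\widetilde{W} \asymp w$ on $\Omega$ by hypothesis, replacing $\widetilde{W}$ by $w$ on $\Omega$ produces the desired $A_1$-extension of $w$, as bounded multiplicative modifications preserve $A_1$.

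For $p > 1$, I would reduce to $p = 1$ via Jones factorization. Globally, $W \in A_p$ if and only if $W = W_0 \cdot W_1^{1-p}$ with $W_0, W_1 \in A_1$. The key intermediate step is a \emph{restricted Jones factorization}: given $w^q \in A_{p,\Omega}$, there exist factors $w_0, w_1$ on $\Omega$ with $w = w_0 \cdot w_1^{1-p}$ and $w_0^r, w_1^r \in A_{1,\Omega}$ for some $r > 1$. Granted this, the $p = 1$ case extends each $w_i$ to an $A_1$-weight $W_i$ on $\mathbb{T}$, and $W := W_0 \cdot W_1^{1-p}$ is the sought $A_p$-extension of $w$.

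The main obstacle is this restricted Jones factorization. I would approach it by running Rubio de Francia's iteration scheme with the $\Omega$-restricted maximal operator $\mathcal{M}_\Omega f(\theta) := \sup_{I \ni \theta} |I|^{-1} \int_{I \cap \Omega} |f|$, verifying that $\mathcal{M}_\Omega$ is bounded on the relevant weighted $L^p$-space under the $A_{p,\Omega}$ hypothesis, and carefully tracking how the integrability gain encoded by $q > 1$ is inherited by the iterated weights, so that the factors satisfy the strengthened $A_{1,\Omega}$ condition with some $r > 1$ rather than just its bare $p$-analogue.
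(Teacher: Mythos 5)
The paper never proves Theorem~\ref{thm:wolff}: it is quoted from \cite{Garcia-CuervaRubiodeFrancia} as known background, and the closest thing to ``the paper's proof'' is its argument for the Békollé--Bonami analogue (Theorems~\ref{theo:extension} and~\ref{ExtensionDyadicB1} in the dyadic setting, then Theorems~\ref{ExtensionContB1} and~\ref{thm:extBp}). Your outline follows exactly that template: self-improvement plus monotonicity of the restricted integrals for $(1)\Rightarrow(2)$; for $(2)\Rightarrow(1)$, Coifman--Rochberg applied to $\big(\mathcal{M}(w^q\chi_\Omega)\big)^{1/q}$ when $p=1$, and a restricted Rubio de Francia factorization reducing $p>1$ to $p=1$. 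The $p=1$ argument and the overall architecture are correct.

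The one step that, as literally written, would fail is ``verifying that $\mathcal{M}_\Omega$ is bounded on the relevant weighted $L^p$-space under the $A_{p,\Omega}$ hypothesis.'' The bare condition $u\in A_{p,\Omega}$ yields only the weak-type $(p,p)$ inequality for $\mathcal{M}_\Omega$ (the analogue of Theorem~\ref{CharacterBpD}); the strong bound needed to make the Rubio de Francia series $\sum_k S^k(u)/(2\|S\|)^k$ converge cannot be extracted from it, precisely because restricted weights have no self-improvement --- that is why hypothesis (2) carries a $q>1$ at all. The correct move, which is the one the paper makes in Theorem~\ref{theo:extension}, is to run the algorithm not on $w^{q}$ but on $w^{\delta q}$ for some $\delta\in(1/q,1)$: Jensen's inequality shows that $w^{\delta q}\in A_{\delta p+1-\delta,\,\Omega}$ with $\delta p+1-\delta<p$, and Marcinkiewicz interpolation between the weak $(\delta p+1-\delta)$-bound and the trivial $L^\infty$-bound gives the strong $(p,p)$ bound for $\mathcal{M}_\Omega$ on $L^p(\Omega,w^{\delta q})$ (and dually for $w^{-\delta q/(p-1)}$). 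The factorization then produces $w^{\delta q}=v_0v_1^{1-p}$ with $v_0,v_1\in A_{1,\Omega}$, so $w_i:=v_i^{1/(\delta q)}$ satisfies $w_i^{r}\in A_{1,\Omega}$ with $r=\delta q>1$, which is exactly what your $p=1$ step (equivalently, Coifman--Rochberg with exponent $1/r<1$, cf.\ Lemma~\ref{B1ConditionMaxFunct}) requires. In short, the gain $q>1$ must be spent twice --- once to obtain the strong boundedness of $\mathcal{M}_\Omega$ and once in the final Coifman--Rochberg step --- and choosing $\delta\in(1/q,1)$ leaves enough for both; with that amendment your proof goes through.
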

    
        By taking logarithms in the Wolff restriction theorem, one obtains a characterization of the restrictions of $\BMO(\mathbb{T})$-functions to $\Omega$ \cite[Chapter IV, Corollary~5.7]{Garcia-CuervaRubiodeFrancia}. Using this, Sundberg gave in \cite{sundberg} a discrete analogue of Theorem \ref{thm:wolff}, by describing the trace of $\BMOA$ functions on interpolating sequences. We recall that a sequence $Z=(z_n)_{n\in\N}$ in the unit disc is interpolating for the space $H^\infty$ of bounded analytic functions on $\D$ if, given any bounded sequence $(w_n)_{n\in\N}$, there exists a $\phi$ in $H^\infty$ such that $\phi(z_n)=w_n$, $n \in \N$. Let
            \[
                \rho(z, w):=\left|\frac{w-z}{1-\overline{w}z}\right|, \qquad z, w\in\D,
            \]
            be the pseudo-hyperbolic distance on $\D$. Carleson  \cite{Carleson} famously showed  that $Z$ is interpolating if and only if it is weakly separated, $\inf_{n\ne j}\rho(z_n, z_j)>0$,
            and the measure
            \[
                \mu_Z:=\sum_{n\in\N}(1-|z_n|^2)\delta_{z_n}
            \]
            satisfies the Carleson condition: $\mu_Z \big( S(I) \big)\leq C_Z |I|$, where $C_Z > 0$ is the Carleson constant for the measure and $I$ is any arc of the unit circle.  We remark that $Z$ satisfies this latter Carleson condition if and only if 
            \begin{equation}
            	\label{eqn:cm}
            	\sup_{z\in\D}\sum_{n\in\N} \big (1-\rho^2(z, z_n) \big)<\infty.
            \end{equation}
            Moreover, Carleson showed that the Carleson condition is equivalent to the embedding condition
            \begin{equation}
            	\label{eqn:cmPoisson}
            	\sum_{n\in\N}|g(z_n)|^2(1-|z_n|^2)\lesssim_{ Z}\dfrac{1}{2\pi} \int_0^{2\pi}|g(e^{i\theta})|^2\,d\theta, \qquad g\in L^2(\T),
            \end{equation}
            where $g(z)$ denotes the value at $z \in \mathbb{D}$ of the Poisson extension of $g$ to the unit disc.
    
        We now give a slightly modified version of the original statement of Sundberg's result, which can be read from its proof.
        
        \begin{theo}[Sundberg \cite{sundberg}]
        \label{theo:sundberg}
            Given a sequence $(w_n)_{n\in\N}$ in $\C$ and a sequence $Z=(z_n)_{n\in\N}$ in $\mathbb{D}$ satisfying the Carleson condition \eqref{eqn:cm}, there exists a function $f$ in $\BMO(\mathbb{T})$ such that $(f(z_n)-w_n)_{n\in\N} \in \ell^\infty$ if and only if there exists a function $\beta\colon\D\to\C$ and a number $\lambda > 0$ such that
                \begin{equation}
                \label{eqn:sundberg}
                    \sup_{z\in\D}\sum_{n\in\N}e^{\lambda|w_n-\beta(z)|} \big(1-\rho^2(z, z_n) \big)<\infty.
                \end{equation}
                Moreover, if $Z$ additionally  is weakly separated (and hence interpolating), then, for every sequence $(w_n)_{n\in\N}$ satisfying \eqref{eqn:sundberg}, there exists a function $f$ in $\BMO(\mathbb{T})$ with holomorphic Poisson extension such that $f(z_n)=w_n$, for every $n$.
        \end{theo}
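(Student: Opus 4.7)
The plan is to prove the equivalence by handling each direction separately, with the necessity direction being elementary (John--Nirenberg plus Carleson embedding) and the sufficiency direction requiring a more involved construction.

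\textbf{Necessity.} Suppose $f \in \BMO(\T)$ satisfies $|f(z_n) - w_n| \leq M$ for all $n$, and let $\beta$ denote the Poisson extension of $f$ to $\D$. The triangle inequality gives
\[
|w_n - \beta(z)| \leq M + |f(z_n) - f(z)|,
\]
so, absorbing a factor $e^{\lambda M}$, it suffices to control the sum $\sum_n e^{\lambda|f(z_n) - f(z)|}(1 - \rho^2(z,z_n))$ uniformly in $z \in \D$. Möbius invariance of the pseudo-hyperbolic distance and of the $\BMO(\T)$-norm (up to an absolute multiplicative constant) reduces matters to $z = 0$, and replacing $f$ by $f - f(0)$ one assumes $f(0) = 0$. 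Jensen's inequality against the Poisson kernel then yields $e^{\lambda|f(w)|} \leq P[e^{\lambda|f|}](w)$, John--Nirenberg places $e^{\lambda|f|} \in L^1(\T)$ for $\lambda$ small depending only on $\|f\|_{\BMO}$, and the $L^1$-Carleson embedding (equivalent to the Carleson condition \eqref{eqn:cm}) closes the estimate.

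\textbf{Sufficiency.} Conversely, assume \eqref{eqn:sundberg}. Specializing $z = z_n$ there produces $|w_n - \beta(z_n)| \leq C$ uniformly in $n$. To build $f$, choose a hyperbolically smooth partition of unity $\{\chi_n\}$ localized around the points of a suitably thinned version of $Z$, and define
\[
F(z) := \sum_n w_n \chi_n(z),
\]
a smooth interpolant of the data on $\D$. The exponential condition \eqref{eqn:sundberg} translates---via standard derivative estimates for $\chi_n$ on hyperbolic balls---into a Carleson measure estimate for $|\bar\partial F|^2(1-|z|^2)\,dA$. Solving the $\bar\partial$-equation $\bar\partial u = \bar\partial F$ with the Jones--Wolff BMO estimate $\|u\|_{\BMO(\D)} \lesssim \|\bar\partial F\|_{\text{Carleson}}$ produces $f := F - u$ holomorphic in $\D$ with $f \in \BMOA$ and $(f(z_n) - w_n)_n \in \ell^\infty$; in particular, the boundary trace lies in $\BMO(\T)$.

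\textbf{Exact interpolation.} When $Z$ is in addition weakly separated, Carleson's theorem provides a bounded linear extension operator $T \colon \ell^\infty(\N) \to H^\infty$ with $(Tc)(z_n) = c_n$. Applying $T$ to the $\ell^\infty$-error sequence $(w_n - f(z_n))_n$ produces $g \in H^\infty \subset \BMOA$ with $g(z_n) = w_n - f(z_n)$, and hence $f + g \in \BMOA$ interpolates $(w_n)_n$ exactly.

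The main obstacle is the Carleson measure estimate on $|\bar\partial F|^2(1-|z|^2)\,dA$: one must leverage the full exponential strength of \eqref{eqn:sundberg}, not merely the $\ell^2$-consequence, to accommodate the logarithmic growth of $|w_n|$ that the hypothesis permits near $\T$. This is where the Sundberg hypothesis---an exponential Carleson condition that mirrors the exponential integrability in John--Nirenberg---plays its decisive role, and where the proof must be executed most carefully.
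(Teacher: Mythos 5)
First, a point of order: the paper does not prove Theorem~\ref{theo:sundberg} at all --- it is quoted from Sundberg's paper \cite{sundberg} (``a slightly modified version of the original statement\dots which can be read from its proof''), so there is no in-paper proof to compare against. Judging your proposal on its own merits: the necessity direction is essentially right, but one step is stated incorrectly. The ``$L^1$-Carleson embedding'' $\sum_n P[h](z_n)(1-|z_n|^2)\lesssim\|h\|_{L^1(\T)}$ is \emph{not} equivalent to the Carleson condition and is in fact false: it would say that the balayage $\theta\mapsto\int_\D P_z(\theta)\,d\mu_Z(z)$ is bounded, whereas for a Carleson measure it is only in $\BMO$ (take $z_k=1-2^{-k}$ and test at $\theta=0$ to see the divergence). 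The repair is easy and uses exactly the $L^2$ embedding \eqref{eqn:cmPoisson} recorded in the paper: by Jensen, $e^{\lambda|f(z_n)|/2}\leq P\bigl[e^{\lambda|f|/2}\bigr](z_n)$, John--Nirenberg puts $e^{\lambda|f|/2}$ in $L^2(\T)$ for $\lambda$ small, and \eqref{eqn:cmPoisson} applied to $g=e^{\lambda|f|/2}$ gives $\sum_n e^{\lambda|f(z_n)|}(1-|z_n|^2)\lesssim\|e^{\lambda|f|}\|_{L^1(\T)}<\infty$.

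The sufficiency direction has a genuine gap, which you partly acknowledge but do not close. With $F=\sum_n w_n\chi_n$ and $\{\chi_n\}$ localized on hyperbolic balls around a thinned (separated) subsequence, the functions $\chi_n$ cannot sum to $1$ on all of $\D$, so on the outer transition region of each ball the relevant size of $\bar\partial F$ is $|w_n|/(1-|z_n|)$ rather than $|w_n-\beta(z)|/(1-|z_n|)$; the resulting contribution to $\int_{S(I)}(1-|z|^2)|\bar\partial F|^2\,dA$ is $\sum_{z_n\in S(I)}|w_n|^2(1-|z_n|^2)$, and since \eqref{eqn:sundberg} permits $|w_n|\simeq\log\frac{1}{1-|z_n|}$, this is not $O(|I|)$. (Conversely, if you let the $\chi_n$ form a genuine partition of unity on all of $\D$, then $\bar\partial F\neq 0$ on a set of full hyperbolic density and the measure $(1-|z|^2)|\bar\partial F|^2dA\simeq dA/(1-|z|^2)$ is again not Carleson.) What is actually needed --- and what constitutes the real content of Sundberg's argument --- is a \emph{global} extension of the data to $\D$ built from the auxiliary function $\beta$ of \eqref{eqn:sundberg} (e.g.\ suitable medians on Whitney boxes), together with a John--Nirenberg-type converse in the spirit of Garnett--Jones showing that the exponential condition forces this extension to lie within bounded distance of the harmonic extension of a $\BMO(\T)$ function; only then does the $\bar\partial$-correction with the Carleson-measure estimate apply. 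Your final step (correcting the $\ell^\infty$ error by an $H^\infty$ interpolant when $Z$ is weakly separated) is fine.
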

    
        The goal of this note is to describe to which extent Theorem \ref{thm:wolff} and Theorem \ref{theo:sundberg} have analog statements in the setting of Békollé-Bonami weights and Bloch functions, respectively, on the unit disc. Following B\'ekoll\'e and Bonami \cite{BekolleBonami78}, for $1 < p < \infty$,  the $B_p$-class is defined as the family of all weights $w$ in the unit disc such $\mathbb{D}$ such that
            \[
            	[w]_{B_p} := \sup_{I \subset \T} ~ \dashint_{S(I)} w \, dA \left( \dashint_{S(I)} w^{-\frac{1}{p-1}} \, dA \right)^{p-1} < \infty,
            \]
            where $dA$ denotes normalized area measure on $\mathbb{D}$, and for an arc $I$, $S(I)$ denotes the corresponding Carleson square, that is, the sector 
            \[
                S(I):=\left\{z\in\D\, : \, \frac{z}{|z|}\in I, 1-|z|<|I|\right\}.
            \]
            For $p=1$, we say that $w \in B_1$ if there is a constant $C$ such that $Mw(z) \leq Cw(z)$ for a.e. $z \in \mathbb{D}$, where $M$ now denotes the associated maximal function,
            \[
                M w (z):=  \sup_{S(I) \ni z} \dashint_{S(I)} w \, dA.
            \]
            As for Muckenhoupt weights, for $1 < p < \infty$, we have that $w \in B_p$ if and only if $M$ is bounded on $L^p(\mathbb{D}, w \, dA)$, and, furthermore, if and only if the Bergman projection is bounded on this weighted $L^p$-space.
    
        However, the analogy stops at a certain point, since B\'ekoll\'e--Bonami weights can be much more irregular than Muckenhoupt weights. In particular, they can easily fail to satisfy the reverse H\"older and self-improvement properties that $A_p$-weights enjoy (cf. Theorem~\ref{thm:wolff} with $\Omega = \mathbb{T}$).   Recently, it was discovered that these desirable properties can be recovered \cite{AlemanPottReguera} if we consider weights $w \in B_p$ which are of \emph{bounded hyperbolic oscillation}, that is, weights for which there is a constant $L_w > 0 $ such that
            \begin{equation} \label{eq:bddhyposc}
                w(z)\le L_w w(\lambda)
            \end{equation}
            provided that $\lambda$ and $z$ belong to the same \emph{top-half} $T(I)$ of a Carleson box:
            \[
                T(I):=\{z\in S(I)\, : \, 1-|z|>|I|/2\}.
            \]
            The least constant $L_w$ for which \eqref{eq:bddhyposc} holds uniformly on all intervals $I$ is called the \emph{constant of bounded hyperbolic oscillation} of the weight $w$. B\'ekoll\'e--Bonami weights satisfying \eqref{eq:bddhyposc}, as well as their connection with the Bloch space, were further explored in \cite{arturadem}.
                    
        Under the regularity assumption \eqref{eq:bddhyposc}, the main theorem of this article establishes the analogue of the Wolff restriction theorem for general subsets $\Omega \subset \mathbb{D}$. For a non-negative weight $w$ on a measurable subset $\Omega \subset \D$, we say that $w \in B_{p, \Omega}$ if
            \begin{equation} \label{RestrictedB_p}
                [w]_{B_{p,\Omega}} := \sup_{I \subset \T} \dfrac{1}{A \big( S(I) \big)}\int_{S(I) \cap \Omega} w \: dA \left( \dfrac{1}{A \big( S(I) \big)} \int_{S(I) \cap \Omega} w^{-\frac{1}{p - 1}} \: dA \right )^{p - 1} < \infty.
            \end{equation}
            When $p = 1$, we define $B_{1, \Omega}$ as the space of all weights $w$ in $\Omega$ such that
            \begin{equation} \label{RestrictedMaximalFunction}
              M_\Omega w (z):=  \sup_{S(I) \ni z} \dfrac{1}{A \big( S(I) \big)} \int_{S(I) \cap \Omega} w \, dA \leq C w(z)
            \end{equation}
            for almost every $z \in \Omega$. We write $[w]_{B_1,\Omega}$ to denote the infimum of the constants $C$ in the above inequality.
    
        Note that if $w \in B_{p, \Omega}$, then $w^{-\frac{1}{p-1}} \in B_{\frac{p}{p-1}, \Omega}$ and $\left[ w^{-\frac{1}{p-1}}\right]_{B_\frac{p}{p-1}, \Omega}$ coincides with $\left [ w \right]_{B_p, \Omega}^\frac{1}{p-1}$.
                
         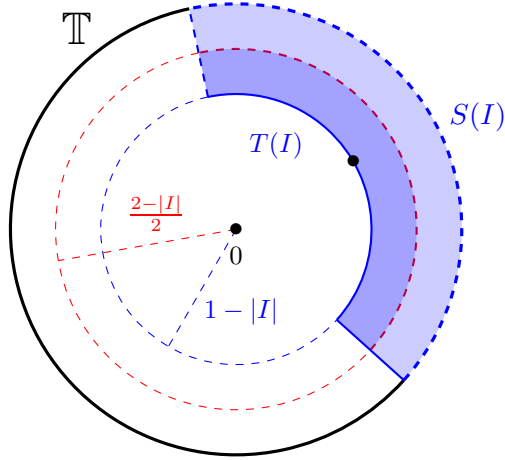
\begin{figure}[h!]
        	\centering
        	\begin{tikzpicture}[x=3cm,y=3cm]
        		%%%%%
        		%Definition of variables
        		%%%%%
        		%Modulus and argument of z
        		\def \mod {0.6};
        		\def \arg {30};
        		%Average of 1 and |z| 
        		\pgfmathsetmacro{\avg}{(1+\mod)/2}
        		%Half of the Carleson box width (degrees)
        		\pgfmathsetmacro{\angle}{180*(1-\mod)}
        		
        		%%%%%%
        		%Drawings
        		%%%%%%
        		%Unit circle
        		%\draw[thick] (0,0) circle (1);
        		\draw[very thick, black] ({\arg + \angle}:1) arc({\arg + \angle}:{\arg - \angle + 360}:1);
        		\draw[very thick, dashed, blue] ({\arg - \angle)}:1) arc({\arg - \angle}:{\arg + \angle}:1);
        		%Inner circle (radius |z|)
        		\draw[dashed, blue] ({\arg + \angle}:\mod) arc({\arg + \angle}:{\arg - \angle + 360}:\mod);
        		\draw[thick, blue] ({\arg - \angle)}:\mod) arc({\arg - \angle}:{\arg + \angle}:\mod);
        		%Outer circle (radius (1+|z|)/2)
        		\draw[dashed, red] ({\arg + \angle}:{\avg}) arc({\arg + \angle}:{\arg - \angle +360}:{\avg});
        		\draw[thick, dashed, red] ({\arg - \angle}:{\avg}) arc({\arg - \angle}:{\arg + \angle}:{\avg});
        		%Segments and nodes
        		%Segment with the label (1 + |z|)/2
        		\draw[dashed, red] (0,0) -- ($ \avg*({cos(\arg + 160)},{sin(\arg + 160))}) $) node[pos=0.45, above=1pt]{\small $\frac{2-|I|}{2}$};
        		%Segment with the label (1 + |z|)/2
        		\draw[dashed, blue] (0,0) -- ($ \mod*({cos(\arg + 210)},{sin(\arg + 210))}) $) node[pos=0.7, right=2pt]{\small $1-|I|$};
        		%Origin
        		\draw (0,0) node{$\bullet$}node[below=3pt]{\small 0};
        		%T
        		\draw ($ ({cos(135)},{sin(135)}) $) node[above=6pt]{\LARGE $\mathbb{T}$};
        		%Lateral borders of the top half
        		\draw[thick, dashed, blue] ($ \mod*({cos(\arg + \angle)},{sin(\arg + \angle)}) $) -- ($ \avg*({cos(\arg + \angle)},{sin(\arg + \angle)}) $);
        		\draw[thick, dashed, blue] ($ \avg*({cos(\arg + \angle)},{sin(\arg + \angle)}) $) -- ($ ({cos(\arg + \angle)},{sin(\arg + \angle)}) $);
        		\draw[thick, blue] ($ \mod*({cos(\arg - \angle)},{sin(\arg - \angle)}) $) -- ($ \avg*({cos(\arg - \angle)},{sin(\arg - \angle)}) $);
        		\draw[thick, blue] ($ \avg*({cos(\arg - \angle)},{sin(\arg - \angle)}) $) -- ($ ({cos(\arg - \angle)},{sin(\arg - \angle)}) $);
        		%Filling the top half
        		\fill[blue, opacity=0.2] ({\arg - \angle}:\mod) arc({\arg - \angle}:{\arg + \angle}:\mod)--({\arg + \angle}:{\avg}) arc({\arg + \angle}:{\arg - \angle}:{\avg})--cycle;
        		\fill[blue, opacity=0.2] ({\arg - \angle}:\mod) arc({\arg - \angle}:{\arg + \angle}:\mod)--({\arg + \angle}:{1}) arc({\arg + \angle}:{\arg - \angle}:{1})--cycle;
        		%T(I), S(I) and its center
        		\draw[blue] ($ \mod*({cos(\arg + 60*(1-\mod))},{sin(\arg + 60*(1-\mod))}) $) node[below left=1pt]{\small $T(I)$};
        		\draw[blue] ($ ({cos(\arg)},{sin(\arg)}) $) node[right=3pt]{$S(I)$};
                    \draw[black] ($ \mod*({cos(\arg)},{sin(\arg)}) $) node{$\bullet$};
        	\end{tikzpicture}
        	\caption{Example of $S(I)$, $T(I)$ and their centre.}
        	\label{fig:TopHalf}
        \end{figure}
    
        Our first main result is the following:
        
        \begin{theo} \label{theo:main1}
            Let $\Omega \subset \D$ of positive measure, and consider a weight $w$ of bounded hyperbolic oscillation on $\Omega$, that is, a measurable function $w \colon \Omega \to (0,\infty)$ satisfying \eqref{eq:bddhyposc} for points $z, \zeta \in \Omega$. Then, for $1 \leq p < \infty$, the following statements are equivalent:
            \begin{enumerate}
                \item \label{Wolff1)} There exists a weight $W \in B_p$ of bounded hyperbolic oscillation such that $w(z) = W(z)$ for almost every $z \in \Omega$.
                \item \label{Wolff2)} There exists $q > 1$ such that $w^q \in B_{p, \Omega}$. 
            \end{enumerate}
        \end{theo}
     
        The reader familiar with the theory of $A_p$ weights notices that any $B_p$ weight of bounded hyperbolic oscillation on $\D$ is an $A_p$ weight, where the $A_{p}$ characteristic is defined via the basis of all discs with centers in $\D$, intersected with $\D$ (see Lemma \ref{lemma:BpAp}). On the other hand, not every $A_p$ weight of $\D$ is of bounded hyperbolic oscillation (e.g., \cite[Example~7.1.7]{Grafakos}) and neither is every $B_p$ weight of bounded hyperbolic oscillation (see \cite{Borichev}). Hence Theorem \ref{theo:main1} does not quite follow from the known generalizations of Theorem \ref{thm:wolff} for $A_p$ weights on more general domains (see for instance \cite{MR4340793}), since we do not know a priori that the $A_p$ extension is of bounded hyperbolic oscillation. 
    
        The proof of Theorem \ref{theo:main1} is contained in Section \ref{sec:proof}, and it relies on factorization techniques. More specifically, in Section \ref{sec:factorization}  we prove that any $w \in B_p$ of bounded hyperbolic oscillation, $p > 1$, can be factored as $w = w_1w_2^{1-p}$, where the weights $w_1, w_2 \in B_1$ are also of bounded hyperbolic oscillation. This type of factorization problem within a subclass of weights has previously been considered by Borichev \cite[Remark~2]{Borichev}, where he studies a different class of well-behaved B\'ekoll\'e--Bonami weights. Section \ref{sec:BHO} contains the properties of weights of bounded hyperbolic oscillation that we will need for our task. 
    
        With Theorem \ref{theo:main1} at our disposal, it is natural to ask what an analog of Theorem \ref{theo:sundberg} should look like. As mentioned earlier in this Introduction, by taking logarithms in the Wolff restriction theorem, one obtains a characterization of the restrictions of $\BMO(\mathbb{T})$-functions to $\Omega$ \cite[Chapter IV, Corollary~5.7]{Garcia-CuervaRubiodeFrancia}. The same is in principle true in our setting, when considering the space
            \[
                \BMO(\D) := \left \{ f \in L^1_{\textrm{loc}} (\D) \, : \, \sup_D \dashint_{D \cap \D} \left| f - \dashint_{D \cap \D} f \right| \, dA < \infty \right\},
            \]
            where the supremum is taken over all discs $D$ centered at points in $\D$. However, as shown in \cite[Section~3]{arturadem}, the logarithm of any weight of bounded hyperbolic oscillation is automatically in $\BMO(\mathbb{D})$, and the restriction problem therefore boils down to the extension of quasi-Lipschitz functions.
    
        To obtain a more interesting restriction problem, we consider the space $\mathcal{B}$ of \emph{holomorphic} functions in $\BMO(\mathbb{D})$. This space is precisely the Bloch space \cite[Section~5]{CoifmanRochbergWeiss76}, which is usually introduced as the space of holomorphic functions $b$ on the unit disc such that
            \[
                \|b\|_\B:=|b(0)|+\sup_{z\in\D}|b'(z)|(1-|z|^2)<\infty.
            \]
            In other words, $\mathcal{B}$ is the space of holomorphic functions in $\mathbb{D}$ which are  Lipschitz with respect to the hyperbolic metric.

        Given an interpolating sequence $Z$, we therefore seek to characterize the trace of the Bloch space $\mathcal{B}$ onto $Z$. Note that condition \eqref{eqn:sundberg} is directly related to the John--Nirenberg inequality for $\BMO(\mathbb{T})$. For the larger Bloch space we instead have the estimate
            \begin{equation}
            \label{eqn:Hedenmalm}
                \sup_{r\in(0, 1)}\int_0^{2\pi}e^\frac{a~|b(re^{i\theta})|^2}{-\log(1-r^2)}\,\frac{d\theta}{2\pi}\leq C_a,\qquad\|b\|_\B\leq1, \; 0 < a < 1,
            \end{equation}
            if $b(0) = 0$; see \cite{MR761804}, \cite{MR794117} or \cite[Th.~8.9]{Pommerenke}. Other variants and further refinements of \eqref{eqn:Hedenmalm} are presented in \cite{heden}.
    
        The second main result of our paper deals with necessary conditions for a sequence to be the trace of a Bloch function on an interpolating sequence:
        
        \begin{theo}
        \label{theo:necessary}
            Let $b$ be a Bloch function, and let $(z_n)_{n\in\N}$ be a sequence in $\D$ satisfying the Carleson condition \eqref{eqn:cm}. Then
            \begin{description}
            	\item[(i)] There exists $\lambda > 0$ such that
                	\begin{equation}
                		\label{eqn:traceblochhede}
                		\sup_{z\in\D}\sup_{r\in(0, 1)}\sum_{\rho(z, z_n)<r}e^\frac{\lambda|b(z_n)-b(z)|^2}{-\log(1-r^2)} \big(1-\rho^2(z, z_n) \big)<\infty.
                	\end{equation}
                    
                \item[(ii)] There exist $\lambda > 0$ and $C > 0$ such that, for all $z$ in $\D$, the function
                    \[
                        \N \ni n \mapsto e^\frac{\lambda|b(z_n)-b(z)|^2}{-\log(1-\rho^2(z, z_n))} \big (1-\rho^2(z, z_n) \big)
                    \]
                    has weak-$L^1$-norm less than $C$ with respect to the counting measure on $\N$, that is,
                    \begin{equation}
                    \label{eqn:tracebloch}
                        \#\left\{n\in\N \,\bigg|\,e^\frac{\lambda|b(z_n)-b(z)|^2}{-\log(1-\rho^2(z, z_n))} \big (1-\rho^2(z, z_n) \big )>t \right\}\leq \frac{C}{t}, \qquad t>0.
                    \end{equation}
            \end{description}
            Furthermore,
            \begin{description}
                \item [(iii)] There exists a function $b\in \B$ and an interpolating sequence $(z_n)_{n\in\N}$, such that, for every $\lambda>0$, the strong summability condition associated to \eqref{eqn:tracebloch} fails at $z=0$, that is,
                \begin{equation}
                \label{eqn:counter}
                    \sum_{n\in\N}e^\frac{\lambda|b(z_n)-b(0)|^2}{-\log(1-|z_n|^2)}(1-|z_n|^2)=\infty.
                \end{equation}
            \end{description}
        \end{theo}
        
        \begin{rem}
        	Since Bloch functions are Lipschitz, we have that  $|b(z)-b(0)|\lesssim\log\frac{1}{1-|z|^2}.$
        	Therefore \eqref{eqn:counter} also implies that \eqref{eqn:sundberg} fails.
        \end{rem}
    
        Of course, Theorem \ref{theo:necessary} begs the following question:
        
        \begin{question}
        \label{q:Bloch}
            Suppose additionally that $Z = (z_n)_{n \in \N}$ is weakly separated, and hence interpolating. Given a sequence $(w_n)$ in $\C$, are either of the conditions \eqref{eqn:traceblochhede} or \eqref{eqn:tracebloch} (with $w_n$ replacing $b(z_n)$, and a general given function $\beta(z)$ replacing $b(z)$) sufficient for the existence of a Bloch function $b$ such that $b(z_n)=w_n$?
        \end{question}
    
        The proof of Theorem \ref{theo:necessary} is found in Section \ref{sec:martingales}. The main ingredient comes from the insight of Makarov, \cite{makarov}, who showed that Bloch functions are in one-to-one correspondence with the set of dyadic martingales on the unit circle that satisfies a certain Lipschitz condition once seen as functions on the dyadic tree (see \eqref{eqn:blochmartingale} for a precise definition). In light of this correspondence, such dyadic martingale are referred to as \emph{Bloch martingales}. This establishes a probabilistic viewpoint on Bloch functions, which comes with an additional set of tools from probability theory to study properties of the Bloch space. For instance, \eqref{eqn:Hedenmalm} is a consequence of the Azuma--Hoeffding inequality for dyadic martingales; see Lemma \ref{lemma:kristian}. Moreover, the law of iterated logarithm for Bloch martingales led Makarov to an estimate on the mean growth of Bloch functions (see \eqref{eqn:lil}).
        
        In particular, any restriction problem for the Bloch space can be recast as a restriction problem for Bloch martingales, and Question \ref{q:Bloch} is equally interesting - and challenging -  in such discrete probabilistic setting.
    
        \subsection*{Notation} Given two positive functions $f$ and $g$, $f(x) \lesssim g(x)$ denotes that there exists a universal constant $C$ such that $f(x) \leq C g(x)$ for all $x$. If also $g(x) \lesssim f(x)$, we say that $f \simeq g$.

        \subsection*{Acknowledgments}
    
            We would like to express our gratitude to Artur Nicolau for helpful discussions, especially concerning Lemma~\ref{lemma:MisBHO} and the sharpness of our second main theorem, namely, Theorem \ref{theo:necessary} (iii). Furthermore, we are grateful to the helpful anonymous referees who provided feedback on the drafts of this manuscript.
        
            The first author was partially supported by the grant 275113 of the  research Council of Norway through the Alain Bensoussan Fellowship Programme from ERCIM, and by the Emmy Noether Program of the German Research Foundation (DFG Grant 466012782). The work of the second author was funded by grant 275113 of the Research Council of Norway through the Alain Bensoussan Fellowship Programme from ERCIM, and by the postdoctoral scholarship JCK22-0052 from The Kempe Foundations and was partially supported by grant PID2019-106870GB-I00 from Ministerio de Ciencia e Innovación (MICINN). The work of the third author was supported by grant 334466 of the Research Council of Norway, ``Fourier Methods and Multiplicative Analysis''.

    \section{Weights of bounded hyperbolic oscillation} \label{sec:BHO}
    
        The aim of this Section is to gather those properties of weights of bounded hyperbolic oscillation that play a role in the proof of Theorem \ref{theo:main1}. 
    
        First, we note that maximal functions of $L^1$ functions are of bounded hyperbolic oscillation:
    
        \begin{lemma} 
        \label{lemma:MisBHO}
            Let $\Omega$ be a set of positive measure of $\D$. If $f \in L^1(\Omega)$ then $M_\Omega f$ is of bounded hyperbolic oscillation, with constant independent of $f$.
        \end{lemma}
    
        \begin{proof}
            Let $z, \lambda \in T(I)$ for certain arc $I \subset \T$. If $z \in S(J)$ for certain $J \subset \T$, then $\lambda$ is included in the Carleson box generated by $3J$, the interval with same center as $J$ and triple its length. Then
                \[
                    \dfrac{1}{A \big( S(J) \big)} \int_{S(J) \cap \Omega} |f| \, dA \leq \dfrac{9}{A \big (S(3J) \big)} \int_{S(3J) \cap \Omega} |f| \, dA \leq 9 M_\Omega f (\lambda),
                \]
                so $M_\Omega f(z) \leq 9 M_\Omega f (\lambda)$ if $z, \lambda \in T(I)$.
        \end{proof}
    
        It is worth pointing out that a weight which is of bounded hyperbolic oscillation is a $B_p$ weight if and only if it is an $A_p$ weight in the unit disc. Restricted $A_p$ weights on a measurable subset $\Omega\subset\D$ are defined from the Hardy-Littlewood maximal function
            \[
                \mathcal{M}^b_\Omega w(z):= \sup_{z\in B} \frac{1}{A(B)} \int_{B \cap \Omega} w \, dA, 
            \]
            where the supremum is taken over all the Euclidean balls  with center in $\D$ containing $z$. A weight $w$ is in the $A_{1, \Omega}$ class if $\mathcal{M}^b_\Omega w\le C w$ almost everywhere on $\Omega$. For $p>1$, $w$ is an $A_{p, \Omega}$ weight if 
            \[
                [w]_{A_{p, \Omega}}:= \sup_{z\in B}\dfrac{1}{A(B)}\int_{B \cap \Omega} w \, dA \left( \dfrac{1}{A (B)} \int_{B \cap \Omega} w^{-\frac{1}{p-1}} \, dA \right)^{p-1} 
            \]
            is finite. Clearly, $M_\Omega w\lesssim\mathcal{M}_\Omega^bw$ for all weights $w$. On the other hand, $\mathcal{M}^b_\Omega w$ can be controlled by $M_\Omega$ and the constant of bounded hyperbolic oscillation of $w$:
            
            \begin{lemma} \label{lemma:BpAp}
                Consider a set $\Omega$ of positive area measure. Let $w$ be a weight of bounded hyperbolic oscillation and $p \geq 1$. Then, $w \in B_{p, \Omega}$ if and only if $w$ is an $A_{p, \Omega}$ weight of $\D$.
            \end{lemma}
    
            \begin{proof}
                Let $p \geq 1$. We just need to check that $w \in B_{p, \Omega}$ is an $A_{p, \Omega}$-weight as well. Let $B$ be the intersection of an Euclidean ball  with center in $\D$ and the unit disc, and let $I \subset \T$ be of minimal length such that $B \subset S(I)$. If $B \subset T(I)$, then
                    \[
                        \dfrac{1}{A(B)}\int_{B \cap \Omega} w \, dA \left( \dfrac{1}{A (B)} \int_{B \cap \Omega} w^{-\frac{1}{p-1}} \, dA \right)^{p-1} \leq \dfrac{\underset{z\in B \cap \Omega}{\sup} w}{\underset{z \in B\cap \Omega }{\inf}w} \leq L_w,
                    \]
                    where $L_w$ is the constant of bounded hyperbolic oscillation of $w$.

                If $B\not\subseteq T(I)$, then $A(B)$ is comparable to $A \big( S(I) \big)$, by the minimality of $I$  and the fact that the center of $B$ lies in $\D$. Therefore
                    \[
                        \dfrac{1}{A(B) } \int_{B \cap \Omega} w \, dA \left( \dfrac{1}{A (B)}  \int_{B \cap \Omega} w^{-\frac{1}{p-1}} \, dA \right)^{p-1} \le C_p [w]_{B_{p,\Omega}}.
                    \]
                    This shows that $[w]_{A_{p, \Omega}}\leq C_{p, L_w} [w]_{B_{p, \Omega}}$ for all $p>1$.
                    When $p = 1$, the same argument yields that
                    \[
                     M_\Omega w(z) \lesssim \mathcal{M}^b_\Omega w(z) \lesssim \max \{L_w, M_\Omega w(z) \}.
                    \]
                    Then the two maximal functions are comparable on weights of bounded hyperbolic oscillation, and $w \in A_{1, \Omega}$ if and only if $w \in B_{1, \Omega}$.
            \end{proof}
    
    \section{Factorization of restricted weights}
    \label{sec:factorization}
             
        The aim of this Section is to prove a factorization result within the class of $B_p$ weights which are of bounded hyperbolic oscillation on a measurable $\Omega \subset \D$. The first step in doing so is to describe weights in $B_{p, \Omega}$ as those that characterize the weak boundedness of the associated maximal function $M_\Omega$. To this end, we will use dyadic tools from modern harmonic analysis.
                
        Let $\dyad_\theta$, $0 \leq \theta \leq 1$, be the collection of all of the arcs of the form
            \[
                I = \left \{ e^{2\pi it} \: : \: \dfrac{j}{2^k} \leq t - \theta < \dfrac{j+1}{2^k} \right \}, \quad 0 \leq j < 2^k, k \geq 0.
            \]
            These are usually called the \emph{dyadic grids} of $\T$. Given a dyadic grid $\dyad_\theta$, we will write $[w]_{B_{p,\Omega,\theta}}$ and $M_{\Omega, \theta}$ to denote the dyadic version of the Békollé-Bonami characteristic and the maximal function defined in \eqref{RestrictedB_p} and \eqref{RestrictedMaximalFunction}, respectively. Namely, 
            \[
                [w]_{B_{p,\Omega, \theta}} := \sup_{I \in\dyad_\theta} \dfrac{1}{A \big( S(I) \big)}\int_{S(I) \cap \Omega} w \: dA \left( \dfrac{1}{A \big( S(I) \big)} \int_{S(I) \cap \Omega} w^{-\frac{1}{p - 1}} \: dA \right )^{p - 1}
            \]
            and
            \[
                M_{\Omega, \theta} w (z):=  \sup_{S(I) \ni z,\,\, I\in\dyad_\theta} \dfrac{1}{A \big( S(I) \big)} \int_{S(I) \cap \Omega} w \, dA 
            \]           
            We write $[w]_{B_1,\Omega, \theta}$ to denote the infimum of the constants $C$ for which $M_{\Omega, \theta}w\le C w$ almost everywhere in $\Omega$. For all $p\ge1$, a weight $w$ is in the $B_{p, \Omega, \theta}$ class if $[w]_{B_{p, \Omega, \theta}}<\infty$.
                   
        It is clear that $B_{p, \Omega} \subset B_{p, \Omega, \theta}$ for all $0\le\theta<1$. We can use an observation by Hytönen and Pérez \cite{HytonenPerez} (even though it seems it can be originally attributed to Garnett and Jones \cite{GarnettJones} and Christ) to see that $B_{p, \Omega}$ is the finite intersection of certain dyadic Békollé-Bonami weight classes. In order to do so, we require the following covering lemma. We refer the reader to \cite{MR2979613} for further information regarding dyadic coverings.
        
        \begin{lemma}
        \label{lemma:covering}
            For all $I\subset\T$ there exists a $J\in\dyad_0\cup\dyad_{1/3}$ such that $I\subset J$ and $|J|\le8|I|$.
        \end{lemma}
        
        \begin{proof}
            Let $k\in\N$ be chosen so that $2^{-k-1}<|I|\le 2^{-k}$, and let's argue by contradiction. If $I$ is not included in any interval of $\dyad_0 \cup \dyad_{1/3}$ of length $2^{-k+2}$, then there are some $j_1, j_2 \in \{0, \ldots, 2^{k-2} - 1\}$ such that $e^{2\pi i \frac{j_1}{2^{k-2}}}, e^{2\pi i \left( \frac{1}{3} + \frac{j_2}{2^{k-2}}\right)} \in I$. Thus we have that
                \[
                    |I| \geq \min_{0 \leq k < 2^{k-2}} \left | \dfrac{1}{3} - \dfrac{j}{2^{k-2}} \right | = \min_{0 \leq k < 2^{k-2}} \left | \sum_{l=1}^\infty \dfrac{1}{2^{2l}} - \dfrac{j}{2^{k-2}} \right | = \sum_{2l > k-2} \dfrac{1}{2^{2l}} > 2^{-k},
                \]
                reaching the desired contradiction.
        \end{proof}
    
        This implies that, while the dyadic Bekolle-Bonami classes are bigger than the continuous ones, by intersecting two particular ones we recover classic $B_p$ weights:
    
        \begin{coro} \label{coro:1/3}
            If $p \ge 1$, then $B_{p, \Omega} = B_{p, \Omega, 0} \cap B_{p, \Omega, 1/3}$. Moreover, 
                \begin{equation} \label{eqn:Twodyadic}
                    M_\Omega f (z) \simeq M_{\Omega,0} f(z) + M_{\Omega, 1/3} f(z)
                \end{equation}
                for all $f$ in $L^1(\D)$.
        \end{coro}
    
        We are now ready to show that $B_{p, \Omega}$ and its dyadic versions are the weights that characterize the weak boundedness of the their corresponding maximal function. The same can be said about $A_{p, \Omega}$ regarding $\mathcal{M}^b_\Omega$, but we will not explicitly prove it because it will not play a role in our argument. Since these three classes of weights are distinct, these weak boundedness properties can be understood as a fundamental difference between the maximal functions $M_\Omega$, $M_{\Omega, \theta}$ and $\mathcal{M}^b_\Omega$.    
    
        \begin{theo} \label{CharacterBpD}
            Let $w \in L^1 (\Omega) \setminus \{ 0 \}$ be a non-negative weight, and let $p > 1$. If $w \in B_{p, \Omega}$, then the weak-type inequality
                \begin{equation}\label{MaximalWeakType}
                    w \big( \{ z \in \Omega : M_{\Omega} f(z) > \lambda \} \big) \lesssim \dfrac{[w]_{B_{p, \Omega}}}{\lambda^p} \| f \|_{L^p(\Omega, w)}^p,
                \end{equation}
                where $w(E):= \int_E w \, dA$, holds for all $\lambda > 0$ and $f \in L^p (\Omega, w)$. Conversely, if the restricted maximal operator is of weak-type $(p,p)$ with respect to $w$, then $w$ is a $B_{p, \Omega}$-weight. 
        \end{theo}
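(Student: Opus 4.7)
The plan is to prove both implications of this classical-style equivalence by standard arguments adapted to the dyadic restricted setting. For the forward direction, I would begin with $f \in L^p(\Omega, w)$ and $\lambda > 0$, set $E_\lambda := \{z \in \Omega : M_{\dyad, \Omega} f(z) > \lambda\}$, and use the tree structure of the dyadic grid: select the maximal arcs $\{I_k\} \subset \dyad_\Omega$ for which the average $\frac{1}{A(S(I_k))} \int_{S(I_k) \cap \Omega} |f|\,dA$ exceeds $\lambda$. Maximality ensures that the sectors $S(I_k) \cap \Omega$ are pairwise disjoint and cover $E_\lambda$, so that $w(E_\lambda) \leq \sum_k w(S(I_k) \cap \Omega)$.

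Next I would apply Hölder's inequality with exponents $p$ and $p/(p-1)$ to write
\[
\lambda A(S(I_k)) < \int_{S(I_k) \cap \Omega} |f| w^{1/p} w^{-1/p} \, dA \leq \|f\|_{L^p(S(I_k)\cap\Omega, w)}\left(\int_{S(I_k)\cap\Omega} w^{-\frac{1}{p-1}} \, dA\right)^{(p-1)/p},
\]
raise to the $p$-th power, and invoke the defining $B_{p,\dyad,\Omega}$-inequality to bound the factor $\left(\int_{S(I_k)\cap\Omega} w^{-\frac{1}{p-1}} \, dA\right)^{p-1}$ by $[w]_{B_{p,\dyad,\Omega}} A(S(I_k))^p / w(S(I_k)\cap\Omega)$. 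After cancellation this yields $w(S(I_k)\cap\Omega) \leq \lambda^{-p}[w]_{B_{p,\dyad,\Omega}}\|f\|_{L^p(S(I_k)\cap\Omega, w)}^p$, and summation over the disjoint family gives \eqref{MaximalWeakType}.

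For the converse, fix any $I \in \dyad_\Omega$, assume without loss that $\int_{S(I) \cap \Omega} w^{-1/(p-1)} \, dA$ is finite (else the $B_p$-inequality holds trivially), and test the hypothesis against $f := w^{-\frac{1}{p-1}} \chi_{S(I) \cap \Omega}$. Since $f$ is constant-in-average along the arc $I$, for every $z \in S(I) \cap \Omega$ we have $M_{\dyad, \Omega} f(z) \geq \alpha := \frac{1}{A(S(I))}\int_{S(I) \cap \Omega} w^{-\frac{1}{p-1}} \, dA$, so $S(I) \cap \Omega \subset \{M_{\dyad, \Omega} f > \lambda\}$ for every $\lambda < \alpha$. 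The weak-type inequality with $\|f\|_{L^p(\Omega, w)}^p = \int_{S(I)\cap\Omega} w^{-\frac{1}{p-1}} \, dA$ then produces
\[
w(S(I) \cap \Omega) \leq \frac{C}{\lambda^p}\int_{S(I) \cap \Omega} w^{-\frac{1}{p-1}} \, dA.
\]
Letting $\lambda \nearrow \alpha$ and rearranging gives precisely the $B_{p,\dyad,\Omega}$ inequality at the arc $I$, with constant no worse than $C$.

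There are no genuine obstacles here; the only point requiring mild care is the Calderón--Zygmund-type stopping-time selection of the maximal arcs in the forward direction, where one must check that membership in $\dyad_\Omega$ is compatible with taking suprema, and that arcs $I$ with $A(S(I) \cap \Omega) = 0$ contribute nothing and can be discarded without affecting either the averages defining $M_{\dyad, \Omega}$ or the disjointness of the selected Carleson sectors.
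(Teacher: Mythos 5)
Your forward direction is correct and is essentially the paper's argument: both decompose $E_\lambda$ into pairwise disjoint sets $S(I)\cap\Omega$ via the dyadic stopping-time selection and then apply H\"older's inequality together with the defining $B_{p,\dyad,\Omega}$ inequality on each selected Carleson square. That part needs no changes.

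The converse, however, contains a genuine gap. You write that one may ``assume without loss that $\int_{S(I)\cap\Omega} w^{-1/(p-1)}\,dA$ is finite (else the $B_p$-inequality holds trivially).'' This is backwards: if that integral is infinite while $\int_{S(I)\cap\Omega} w\,dA>0$, the $B_{p,\dyad,\Omega}$ quantity at $I$ is $+\infty$, so the condition \emph{fails} rather than holding trivially. Since the whole point of the converse is to prove that $w\in B_{p,\dyad,\Omega}$, you cannot discard exactly the case in which the conclusion would be false; you must show that the weak-type hypothesis rules it out. Moreover, in that case your test function $f=w^{-1/(p-1)}\chi_{S(I)\cap\Omega}$ does not belong to $L^p(\Omega,w)$ (its norm is precisely $\bigl(\int_{S(I)\cap\Omega}w^{-1/(p-1)}\,dA\bigr)^{1/p}$), so the weak-type inequality cannot even be invoked. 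The paper resolves both issues at once by testing with the truncations $f_n:=w^{-1/(p-1)}\chi_{S(I)\cap\Omega\cap\{w^{-1/(p-1)}\leq n\}}$, which are always in $L^p(\Omega,w)$, obtaining a bound uniform in $n$, and then letting $n\to\infty$ by monotone convergence; this simultaneously proves finiteness of $\int_{S(I)\cap\Omega}w^{-1/(p-1)}\,dA$ and the $B_p$ bound. A preliminary step (testing the weak-type inequality against $\chi_K$ for $K\subset S(I)\cap\Omega$) is also needed to show $w>0$ a.e., which justifies that the truncated sets exhaust $S(I)\cap\Omega$. Your remaining computation, letting $\lambda\nearrow\alpha$ and rearranging, is fine once these points are repaired; but contrary to your closing remark, it is the converse, not the stopping-time selection, that requires the real care here.
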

    
        \begin{proof}
            In light of Corollary~\ref{coro:1/3}, it is enough prove the dyadic version of the statement.
            
            First, assume that $M_{\Omega, \theta}$ is of weak-type $(p,p)$. Take $I \in \dyad_\theta$ such that $w \big( S(I) \cap \Omega \big) > 0$. If $K \subset S(I) \cap \Omega$ is a measurable set with positive area, we see that
                \[
                    w \big ( S(I) \cap \Omega \big) \leq w \left ( \left \{ z \in \Omega : M_{\Omega, \theta} \chi_K (z) \geq \dfrac{A(K)}{A \big( S(I) \big)} \right \} \right ) \leq C \left( \dfrac{A \big( S(I) \big)}{A(K)} \right)^p w(K).
                \]
                Choosing $I = \T$, we realize that $w$ cannot vanish in a set of positive area.

            Testing with the function 
                \[
                    f_n := w^{-1/(p-1)} \chi_{S(I) \cap \Omega \cap \{ w^{-1/(p-1)} \leq n \}}
                \]
                and the number $\lambda_n = \frac{1}{A ( S(I) )} \int_{S (I) \cap \Omega} f_n \: dA$ in \eqref{MaximalWeakType}, we see that
                \[
                    \dfrac{1}{A \big( S(I) \big)}\int_{S(I) \cap \Omega} w \: dA \left( \dfrac{1}{A \big( S(I) \big)} \int_{S (I) \cap \Omega \cap \{ w^{-1/(p-1)} \leq n \}} w^{-\frac{1}{p - 1}} \: dA \right)^{p - 1} \leq C, \quad \forall n \geq 1.
                \]
                Therefore the monotone convergence theorem and the fact that $w > 0$ almost everywhere yield that
                \[
                    \dfrac{1}{A \big( S(I) \big)}\int_{S(I) \cap \Omega} w \: dA \left( \dfrac{1}{A \big( S(I) \big)} \int_{S (I) \cap \Omega} w^{-\frac{1}{p - 1}} \: dA \right)^{p - 1} \leq C, \quad \forall I \in \dyad_\theta,
                \]
                and hence $w$ is a restricted dyadic Békollé--Bonami weight.
              
            Now, assume $w \in B_{p, \Omega, \theta}$, take $f \in L^p (\Omega, w)$, let $\lambda > 0$, and consider the superlevel set $E_\lambda := \{ z \in \Omega \: : \: M_{\Omega, \theta} f(z) > \lambda \}$. If $z \in E_\lambda$, then there exists $I \in \dyad_\theta$ with $z \in S(I) \cap \Omega$ and
                \[
                    \dfrac{1}{A \big( S(I) \big)} \int_{S(I) \cap \Omega} |f| \: dA > \lambda,
                \]
                so that $S(I) \cap \Omega \subset E_\lambda$. Therefore, by the dyadic structure, $E_\lambda = \cup_{I \in \mathcal{J}} \big( S(I) \cap \Omega \big)$ for a disjoint collection $\mathcal{J} \subset \dyad_\theta$ satisfying that
                \[
                    \dfrac{1}{A \big ( S(I) \big) } \int_{S(I) \cap \Omega} |f| \: dA > \lambda, \qquad I \in \mathcal{J}.
                \]
                Therefore Hölder's inequality implies that
                \begin{eqnarray*}
                    w \big( S(I) \cap \Omega \big) & \leq & \dfrac{\int_{S(I) \cap \Omega} |f|^p w \: dA}{\lambda^p} \dfrac{1}{A \big( S(I) \big)} \int_{S(I) \cap \Omega} w \: dA \left( \dfrac{1}{A \big( S(I)  \big)} \int_{S(I) \cap \Omega} w^{-\frac{1}{p-1}} \: dA \right)^{p-1} \\
                        & \leq & \dfrac{[w]_{B_{p, \Omega,\theta}}}{\lambda^p} \int_{S(I) \cap \Omega} |f|^p w \: dA
                \end{eqnarray*}
                for all $I \in \mathcal{J}$, and hence \eqref{MaximalWeakType} holds for the maximal function $M_{\Omega, \theta}$. We conclude the proof by applying \eqref{eqn:Twodyadic}.
        \end{proof}
    
        \begin{rem} \label{rem:maxopbdd}
            When $\Omega = \D$, $w \in B_{p}$ if and only if $M$ is \emph{bounded} on $L^p(\D, w)$. The same can be said regarding the dyadic case. See, for instance, \cite{Lerner}.
        \end{rem}
                
        Next we will prove a factorization theorem for $B_{p, \Omega}$.  This result is a consequence of Rubio de Francia's factorization techniques \cite{RubiodeFranciaArticle}. Nevertheless, we include a constructive proof whose original idea is due to Coifman, Jones and Rubio de Francia \cite{CoifmanJonesRubiodeFrancia}, as we will need to inspect the details of this proof when considering factorization within the class of weights which are of bounded hyperbolic oscillation.
    
        \begin{theo} \label{FactorizationRubiodeFrancia}
            Let $\Omega \subset \D$ a measurable set with positive area and $p>1$. If $w \in L^1 (\Omega) \setminus \{ 0 \}$ satisfies that $M_{\Omega}$ is bounded on $L^p(\Omega, w)$ and $L^{\frac{p}{p-1}} (\Omega, w^{-\frac{1}{p-1}})$, then there exist $B_{1, \Omega}$ weights $w_1$ and $w_2$ such that $w = w_1 w_2^{1 - p}$ and
                \[
                    [w_1]_{B_{1, \Omega}}, [w_2]^{p-1}_{B_{1, \Omega}} \leq 2 \left( \| M_{\Omega} \|_{L^\frac{p}{p-1}\left(\Omega, w^{-\frac{1}{p-1}}\right)} + \| M_{\Omega} \|_{L^p (\Omega, w)}^{p-1} \right).
                \]
                Moreover, if $w$ is of bounded hyperbolic oscillation, then so are $w_1$ and $w_2$. 
        \end{theo}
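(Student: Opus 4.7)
The plan is to reuse verbatim the construction from the preceding factorization theorem and only track how the almost-constant property propagates through it. Concretely, with $u \equiv 1$, I form the operator $S$ as in \eqref{AuxiliarOperatorS}, build $f$ as in \eqref{AuxiliarFunctionFactorization}, and set $w_1 := fw$ and $w_2 := f^{1/(p-1)}$. The hypothesis $p \in (1,2]$ is the same that made the previous proof work, and the $B_{1,\dyad,\Omega}$ bounds derived there are exactly those claimed in the first part of the statement, so no further work is required for the factorization or its norm estimates. The genuinely new content is the ``moreover'' assertion.

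For that, I would first record three elementary closure properties of the class of functions almost constant on dyadic $3/4$-top halves: if $a, b$ are almost constant with constants $C_a, C_b$, then $a+b$ is almost constant with constant $\max(C_a, C_b)$, $ab$ with constant $C_aC_b$, and $a^{\alpha}$ (for $a > 0$, $\alpha > 0$) with constant $C_a^{\alpha}$. Combining these with Lemma~\ref{MaxFunctAlmostConstant}, which bounds $C_{M_{\dyad,\Omega} g} \leq 4$ \emph{uniformly} in $g$, I would then show that $S(u)$ is almost constant on $3/4$-top halves with $C_{S(u)} \leq 4 C_w$, independently of $u$. Indeed, the first summand $M_{\dyad,\Omega}(uw)w^{-1}$ has constant at most $4 \cdot C_w$ (the maximal function contributes $4$, and $w^{-1}$ inherits the constant of $w$), while the second summand $M_{\dyad,\Omega}^{p-1}(|u|^{1/(p-1)})$ has constant at most $4^{p-1} \leq 4$ since $p-1 \in (0,1]$. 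As $C_w \geq 1$, the maximum of the two is $4 C_w$.

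Iterating this uniform estimate, each $S^k(1)$ with $k \geq 1$ is almost constant with constant $\leq 4 C_w$, while the $k=0$ term is trivially constant with constant $1 \leq 4C_w$. The infinite sum defining $f$ in \eqref{AuxiliarFunctionFactorization} therefore satisfies $C_f \leq 4 C_w$. Applying the product and power closure properties one last time yields
\[
C_{w_1} \leq C_f \cdot C_w \leq 4 C_w^2, \qquad C_{w_2} \leq C_f^{1/(p-1)} \leq (4 C_w)^{1/(p-1)},
\]
as required. The argument has essentially no obstacle: the only technical point is realizing that the constant in Lemma~\ref{MaxFunctAlmostConstant} does not depend on the input function, which is what allows the bound $C_{S(u)} \leq 4 C_w$ to survive iteration; everything else is bookkeeping with the three closure properties above.
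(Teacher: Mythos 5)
Your proposal is correct and follows essentially the same route as the paper: the authors also obtain the ``moreover'' part by observing that Lemma~\ref{MaxFunctAlmostConstant} gives the uniform constant $4$ for the maximal function regardless of its input, deducing $C_{S(u)}\leq 4C_w$ for every positive $u$, hence $C_f\leq 4C_w$ for the Rubio de Francia sum, and then reading off $C_{w_1}\leq 4C_w^2$ and $C_{w_2}\leq(4C_w)^{1/(p-1)}$ from the product and power rules. Your write-up simply makes explicit the closure properties (sum with $\max$, product, powers, and $C_{w^{-1}}\leq C_w$) that the paper leaves implicit.
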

        
        \begin{proof}
   
            Assume that $p \in (1,2]$ (if not, we can factor $w^{-\frac{1}{p-1}} \in B_{\frac{p}{p-1}, \Omega}$ instead). Consider the operator defined as
                \begin{equation}\label{AuxiliarOperatorS}
                    S(f) := M_{\Omega} (fw) w^{-1} + M_{\Omega}^{p-1} \left( |f|^\frac{1}{p-1}\right).
                \end{equation}
    
            It is plain that $M_{\Omega}$ is a positive and sublinear operator. Since $1 < p \leq 2$, it is not difficult to check that $S$ is also positive and sublinear. Moreover, $S$ is bounded from $L^{\frac{p}{p-1}} (\Omega, w)$ to itself. Indeed, if $f \in L^{\frac{p}{p-1}} (\Omega, w)$ then $|f|^{\frac{1}{p-1}} \in L^p (\Omega, w)$ and $fw \in L^{\frac{p}{p-1}} (\Omega, w^{-\frac{1}{p-1}})$, so we have that
                \[
                    \left \| M_{\Omega}^{p-1} \left( |f|^{\frac{1}{p-1}}\right) \right \|_{L^{\frac{p}{p-1}} (\Omega, w)} \leq \| M_{\Omega} \|_{L^p (\Omega, w)}^{p-1} \| f \|_{L^{\frac{p}{p-1}} (\Omega, w)}
                \]
                and
                \[
                    \left \| M_{\Omega} (fw) w^{-1} \right \|_{L^{\frac{p}{p-1}} (\Omega, w)} \leq \| M_{\Omega} \|_{L^\frac{p}{p-1} \left(\Omega, w^{-\frac{1}{p-1}} \right)} \| f \|_{L^{\frac{p}{p-1}} (\Omega, w)}.
                \]
    
            Observe also that, by definition, the operator $S$ satisfies
                \[
                    M_{\Omega}^{p-1} \left( |f|^\frac{1}{p-1} \right) \leq S(f) \quad \mbox{ and } \quad M_{\Omega} (fw) \leq S(f)w,
                \]
                for all $f \in L^\frac{p}{p-1} (\Omega, w)$.
    
            Now, consider the function
                \begin{equation} \label{AuxiliarFunctionFactorization}
                    f := \sum_{k = 0}^\infty \dfrac{S^k(u)}{(2 \| S \|)^k} \in L^\frac{p}{p-1} (\Omega, w),
                \end{equation}
                where $u$ is any positive $L^\frac{p}{p-1}(\Omega, w)$ function (for instance, we might fix $u \equiv 1$). Since $S$ is positive and sublinear, an immediate computation shows that
                    \[
                        S(f) \leq \sum_{k = 1}^\infty \dfrac{S^{k+1}(u)}{(2 \| S \|)^k} = 2 \| S \| \sum_{m = 2}^\infty \dfrac{S^{m}(u)}{(2 \| S \|)^m} \leq 2 \| S \| f
                    \]
                    almost everywhere in $\Omega$. Therefore he have that
                    \[
                        M_{\Omega} \left( f^\frac{1}{p-1} \right) \leq (2 \| S \|)^\frac{1}{p-1} f^\frac{1}{p-1} \quad \mbox{ and } \quad M_{\Omega} (fw) \leq 2 \| S \| fw.
                    \]
                    In other words, we have that $w_1 := f w$ and $w_2 := f^{\frac{1}{p-1}}$ belong to $B_{1, \Omega}$ and moreover
                    \begin{eqnarray*}
                        [w_1]_{B_{1, \Omega}} & \leq & 2 \left( \| M_{\Omega} \|_{L^\frac{p}{p-1}\left(\Omega, w^{-\frac{1}{p-1}}\right)} + \| M_{\Omega} \|_{L^p (\Omega, w)}^{p-1} \right), \\
                        {[w_2]}_{B_{1, \Omega}} & \leq & 2^\frac{1}{p-1} \left( \| M_{\Omega} \|_{L^\frac{p}{p-1}\left(\Omega, w^{-\frac{1}{p-1}}\right)} + \| M_{\Omega} \|_{L^p (\Omega, w)}^{p-1} \right)^\frac{1}{p-1}.
                    \end{eqnarray*}
                    Since $w = w_1 w_2^{1-p}$, we have obtained the desired factorization.

            Note that, at this point, we have not required the fact that $w$ is of bounded hyperbolic oscillation. Without this assumption, of course the factors may fail to satisfy this property.
        
            Now suppose that $w$ is of bounded hyperbolic oscillation. In light of Lemma~\ref{lemma:MisBHO}, observe that for any positive $u$ the operator $S(u)$ introduced in \eqref{AuxiliarOperatorS} is of bounded hyperbolic oscillation as well, with constant less than or equal to $9L_w$. Hence, the function defined in \eqref{AuxiliarFunctionFactorization} satisfies $L_f \leq 9L_w$, and therefore Rubio de Francia's factorization result also holds within this subclass of weights.
        \end{proof}
    
    \section{Proof of Theorem \ref{theo:main1}} 
    \label{sec:proof}
    
        The first ingredient of the proof of Theorem \ref{theo:main1} is the following lemma, whose proof is mostly based on Theorem 3.4 in Chapter II of \cite{Garcia-CuervaRubiodeFrancia}, although the original idea goes back to \cite{CoifmanRochberg}.
                
        \begin{lemma} \label{B1ConditionMaxFunct}
           Let $w$ be an integrable weight in $\D$. Then, for every $\gamma \in (0,1)$ we have that $(M w)^\gamma \in B_{1}$. Furthermore, $[(M w)^\gamma]_{B_{1}} \lesssim C$ for some $C = C(\gamma)$.
        \end{lemma}
    
        \begin{proof}
            First, we will show that
                \[
                    M_{0} (M w)^\gamma \lesssim (M w)^\gamma
                \]
                in $\D$, with a constant depending only on $\gamma$. Let $I \in \dyad_0$ and $z \in S(I)$. Consider $J_1, J_2 \in \dyad_{1/3}$ of the same generation such that $S(I) \subset S(J_1) \cup S(J_2)$. Define the weights $w_1 = w \chi_{S(J_1) \cup S(J_2)}$ and $w_2 = w - w_1$.
            
            On the one hand, we have that
                \[
               \dashint_{S(I)} (M w_1)^\gamma \: dA = \dfrac{\gamma}{A \big( S(I) \big)} \int_0^\infty t^{\gamma - 1} A \big( \{ z \in S(I) : M w_1 (z) > t \} \big) \: dt.
                \]
            
            Observe that
            \[
                \dfrac{\gamma}{A \big( S(I) \big)} \int_0^{\frac{w ( S(I) )}{A( S(I) )}} t^{\gamma - 1} A \big( \{ z \in S(I) : M w_1 (z) > t \} \big) \: dt \leq \left( \dfrac{w \big( S(I) \big)}{A \big( S(I) \big)}\right)^\gamma
            \]
            and since $M$ is of weak-type $(1,1)$ (with respect to the area measure) we deduce that
            \begin{eqnarray*}
                \dfrac{\gamma}{A \big( S(I) \big)} \int_{ \frac{w ( S(I) )}{ A ( S(I) )}}^\infty t^{\gamma - 1} A \big( \{ z \in S(I) : M w_1 (z) > t \} \big) \: dt & \leq & \dfrac{\gamma \| w_1 \|_{L^1(\D)}}{A \big( S(I) \big)} \int_{ \frac{w ( S(I) )}{ A ( S(I) )}}^\infty t^{\gamma - 2} \: dt \\
                & \leq & \dfrac{\gamma}{1 - \gamma} \dfrac{\| w_1 \|_{L^1 (\D)}} {A \big( S(I) \big)} \left( \dfrac{w \big( S(I) \big)}{A \big( S(I) \big)}\right)^{\gamma-1}.
            \end{eqnarray*}

            Let $\zeta_1, \zeta_2 \in T(I)$ such that $\zeta_1 \in S(J_1)$ and $\zeta_2 \in S(J_2)$. Without loss of generality, we may choose that $\zeta_1$ has the same argument as $z$, so that $M w (\zeta_1) \leq M w (z)$ (see Figure~\ref{fig:ChoiceOfZeta}).
            
            \begin{figure}[h!]
                \centering
                \begin{tikzpicture}[x=3cm,y=3cm]
                    %%%%%
                    %Definition of variables
                    %%%%%
                    %Rotation of the grid
                    \def \thet {60};
                    %Maximum generation
                    \def \maxlevel {4};
                    %Modulus and argument of the first point (z)
                    \def \mod {0.9};
                    \def \arg {-10};
                    \pgfmathsetmacro{\angle}{180*(1-\mod)}
            
                    %%%%%%
                    %Drawings
                    %%%%%%
                    %Unit circle
                    \draw[thick] (0,0) circle (1);
                    %Plotting the dyadic grid
                    \foreach \i in {1,...,\maxlevel}
                    {
                        %Setting the i-th radius
                        \pgfmathsetmacro{\radtemp}{1-2^(-\i)}
                        \pgfmathsetmacro{\rangecounter}{2^\i}
                        %Drawing the i-th circle
                        \draw[opacity=0.3] (0,0) circle (\radtemp);
                        %Dividing the i-th annulus
                        \foreach \j in {1,...,\rangecounter}
                        {
                            \draw[opacity=0.3] ($ \radtemp*({cos(\thet + \j*360/\rangecounter)},{sin(\thet + \j*360/\rangecounter)}) $) -- ($  ({cos(\thet + \j*360/\rangecounter)},{sin(\thet + \j*360/\rangecounter)}) $);
                        }
                    }
                    
                    %Drawing z
                    \draw[red] ($ \mod*({cos(\arg)},{sin(\arg)}) $) node{$\bullet$}node[above, left]{$z$};
                    %Filling the top half (it IS NOT automated)
                    \draw[dashed, red, thick] (0:0.75)--(0:1);
                    \draw[red, thick] (0:0.75) arc(360:270:0.75)--(270:1);
                    \fill[red, opacity=0.3] (270:0.75) arc(270:360:0.75)--(360:1) arc(360:270:1)--cycle;
                    %Drawing zeta_1
                    \draw[blue] ($ 0.75*({cos(\arg)},{sin(\arg)}) $) node{$\bullet$}node[above left]{$\zeta_1$};
                    %Filling the top half (it IS NOT automated)
                    \draw[dashed, blue, thick] (60:0.75)--(60:1);
                    \draw[blue, thick] (60:0.75) arc(420:330:0.75)--(330:1);
                    \fill[blue, opacity=0.15] (330:0.75) arc(330:420:0.75)--(420:1) arc(420:330:1)--cycle;
            
                    %Drawing zeta_2
                    \draw[blue] ($ 0.8*({cos(\arg-30)},{sin(\arg-30)}) $) node{$\bullet$}node[above left]{$\zeta_2$};
                    %Filling the top half (it IS NOT automated)
                    \draw[blue, thick] (-30:0.75) arc(330:240:0.75)--(240:1);
                    \fill[blue, opacity=0.15] (240:0.75) arc(240:330:0.75)--(330:1) arc(330:240:1)--cycle;
            
                    %T
                    \draw ($ ({cos(135)},{sin(135)}) $) node[above=6pt]{\LARGE $\mathbb{T}$};
                \end{tikzpicture}
                \caption{Example of {\color{red} $S(I)$} and {\color{blue} $S(J_1) \cup S(J_2)$}.}
                \label{fig:ChoiceOfZeta}
            \end{figure}
            
            Since $\| w_1 \|_{L^1(\D)} = w \big( S(J_1) \big) + w \big( S(J_2) \big)$ we have that
            \[
                \dfrac{\| w_1 \|_{L^1(\D)}}{A \big( S(I) \big)} \leq M w (\zeta_1) + M w (\zeta_2)
            \]
            and hence Lemma \ref{lemma:MisBHO} yields that
            \[
                \dashint_{S(I)} \big( M w_1 )^\gamma \, dA \lesssim \dfrac{\gamma}{1 - \gamma} \big( M w (z) \big)^\gamma.
            \]

            Since $w_2$ is supported outside $S(J_1) \cup S(J_2)$, we have that
            \[
                \dashint_{S(I)} \big( M_{0} w_2 \big)^\gamma \, dA = \big( M_{0} w_2 \big)^\gamma (z) \leq \big( M w (z) \big)^\gamma
            \]
            and
            \[
                \dashint_{S(I)} \big( M_{1/3} w_2 \big)^\gamma \, dA = \dfrac{A \big ( S(J_1) \cap S(I) \big) \big( M_{1/3} w_2 (\zeta_1) \big)^\gamma + A \big ( S(J_2) \cap S(I) \big) \big( M_{1/3} w_2 (\zeta_2) \big)^\gamma}{A \big ( S(I) \big)}.
            \]
            Again Lemma \ref{lemma:MisBHO} yields that
            \[
                \dashint_{S(I)} \big( M_{1/3} w_2 \big)^\gamma \, dA \lesssim \big( M w (z) \big)^\gamma
            \]
            and therefore \eqref{eqn:Twodyadic} implies that there exists $C = C(\gamma)$ such that
            \[
                \dashint_{S(I)} \big( Mw \big)^\gamma \, dA \leq C \big( Mw (z) \big)^\gamma.
            \]

            Summing up, we have deduced that $M_{0} (Mw)^\gamma \leq C(\gamma) (Mw)^\gamma$ in $\D$. Replacing $\dyad_0$ by $\dyad_{1/3}$, \eqref{eqn:Twodyadic} implies again that $(Mw)^\gamma \in B_1$ with a constant depending only on $\gamma$.
        \end{proof}
                
        The following result highlights a deviation between $B_1$ and the classical setting of $A_1$.
    
        \begin{coro}
            There exists $w \in B_1 \setminus \{ (M f)^\gamma : f \in L^1(\D),  0 < \gamma < 1 \}$. Necessarily, this weight is not of bounded hyperbolic oscillation.
        \end{coro}
    
        We are ready to prove our main extension theorem, Theorem~\ref{theo:main1}.  Observe that thanks to Theorem \ref{FactorizationRubiodeFrancia}, it is enough to prove the case $p=1$. Indeed, let $p>1$ and suppose that $w^q$ is a $B_{p, \Omega}$ weight for some $q>1$. Then, given $1<t<q$, $w^t$ satisfies the strong $L^p$ estimates required in Theorem \ref{FactorizationRubiodeFrancia}, thanks to Theorem \ref{CharacterBpD} and Jensen's inequality. Therefore, $w^t=w_1w_2^{1-p}$, for some $w_1$ and $w_2$ in $B_{1, \Omega}$ of bounded hyperbolic oscillation. Assuming Theorem \ref{theo:main1} for $p=1$, we can extend both $w_1^{1/t}$ and $w_{2}^{1/t}$ to $B_1$ weights $W_1$ and $W_2$ of bounded hyperbolic oscillation. Thus $W_1W_2^{1-p}$ is a $B_p$ weight of bounded hyperbolic oscillation that extends $w$.
    
        The implication (\ref{Wolff2)})$\Longrightarrow$(\ref{Wolff1)}) is just the self-improvement property for $A_p$ weights (see \cite{AlemanPottReguera} as well). We are therefore left to show that (\ref{Wolff1)})$\implies$(\ref{Wolff2)}) for $p=1$: 
    
        \begin{theo} \label{WolffB1}
            Let $\Omega$ be a set of positive area of $\D$, and let $w$ be a weight in $\Omega$ of bounded hyperbolic oscillation. If $w^q \in B_{1, \Omega}$ for some $q > 1$, then there exists $W \in B_{1}$, of bounded hyperbolic oscillation, such that $W(z) = w(z)$ for almost every $z \in \Omega$.
        \end{theo}
    
        \begin{proof}
            Since $w$ is of bounded hyperbolic oscillation,  from Lemma \ref{lemma:BpAp} we have that
                \[
                    w^q(z) \leq \mathcal{M}^b_\Omega (w^q) (z) \leq K_w M_\Omega (w^q) (z)
                \]
                for almost every $z \in \Omega$. Considering $w$ as a weight defined on $\D$ (setting $w(z) := 0$ for all $z \in \D \setminus \Omega$), we see that
                \[
                    M (w^q) (z) = M_{\Omega} (w^q) (z) \leq [w^q]_{B_{1, \Omega}} w^q (z)
                \]
                for almost every $z \in \Omega$. In other words, we have seen that there exists a function $k$ in $\Omega$, with $\log k$ bounded by a quantity depending only on $w$ and $q$, such that
                \[
                    w (z) = \big( k(z) M (w^q) (z) \big)^{\frac{1}{q}} =: W(z)
                \]
                for almost every $z \in \Omega$. Finally, setting $k (z) := 1$ in $z \in \D \setminus \Omega$ we have that $W$ is of bounded hyperbolic oscillation (because of Lemma~\ref{lemma:MisBHO}) and it is $B_1$ (by Lemma \ref{B1ConditionMaxFunct}). 
        \end{proof}
    
    \section{Restrictions of Bloch functions and martingales}
    \label{sec:martingales}
    
        We first recall the probabilistic framework, due to Makarov \cite{makarov}, connecting Bloch functions and dyadic martingales. To this end, in this section, let $\dyad_k$ be the set of dyadic sub-intervals of $[0, 1]$ of length $2^{-k}$,
            \[
                \dyad_k:=\{[j/2^{-k}, (j+1)/2^{-k})\,:\, j=0, \dots, 2^{k}-1 \},
            \]
            and let $\dyad=\bigcup_{k=0}^\infty\dyad_k$ be the collection of all dyadic intervals. A dyadic martingale is a sequence of random variables $M=(M_n)_{n\in\N}$, defined on $[0, 1]$, such that
            \begin{description}
                \item[i)] $M_k$ is constant on each interval $I \in \dyad_k$ - we denote the value by $M_I$ 
                \item[ii)]whenever $I=I_1\cup I_2$, $I \in \dyad_k$, $I_j\in\dyad_{k+1}$ for $j=1, 2$, it holds that
                    \[
                        M_{I}=\frac{1}{2}\left(M_{I_1}+M_{I_2}\right).
                    \]
            \end{description}
            Depending on the context, we will refer to $M$ either as the sequence $(M_n)_{n\in\N}$ or the family $(M_I)_{I\in\dyad}$. When identifying $\dyad$ with a dyadic tree, one can view a dyadic martingale as a function on the tree nodes, whose value at each node is the average of its values at that node's children. 
        
        Makarov first observed that given a Bloch function $b$ and a dyadic interval $I$, the limit
            \begin{equation}
            \label{eqn:bI}
                b_I:=\lim_{r\to1}\frac{1}{|I|}\int_Ib(re^{ 2\pi i\theta})\, d\theta
            \end{equation}
            exists, and the collection $(b_I)_{I\in\dyad}$ defines a dyadic martingale on $[0, 1]$ thanks to holomorphicity. Moreover, there exists a universal constant $C$ such that
            \begin{equation}
                |b_I-b(z)|<C\|b\|_\B, \qquad z\in T(I). 
            \end{equation}
            Therefore, since $b$ is Lipschitz with respect to the hyperbolic metric of $\D$, the martingale $(b_I)_{I\in\dyad}$ satisfies the following:
            \begin{equation}
            \label{eqn:blochmartingale}
                |b_I-b_J|<C\|b\|_\B
            \end{equation}
            for every pair of adjacent dyadic intervals $I$ and $J$ of the same length. In particular, $(b_I)_{I\in\dyad}$ has uniformly bounded increments:
            \[
                |b_I-b_{I'}|<C'\|b\|_{\B}
            \]
            for all $I\in\dyad_n$ and $I'\subset I$ in $\dyad_{n+1}$. Moreover, it turns out that \eqref{eqn:blochmartingale} characterizes those dyadic martingales which arise from Bloch functions:
            
        \begin{lemma}[\hspace{1sp}{\cite[Lemma~2.1]{makarov}}]
            A real dyadic martingale $(S_I)_{I\in\dyad}$ satisfies \eqref{eqn:blochmartingale} if and only if there exists a Bloch function $b$ such that $\mre b_I=S_I$ for all $I\in\dyad$.
        \end{lemma}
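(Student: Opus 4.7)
The forward implication, essentially recorded in the paragraph preceding the lemma, goes as follows. Given $b \in \B$ and adjacent dyadic intervals $I, J$ of the same length, I would pick $z \in T(I)$ and $w \in T(J)$; the top halves being hyperbolically close, $\beta(z, w) = O(1)$. Then \eqref{eqn:blochmartingale} together with the Lipschitz property of $b$ with respect to $\beta$ gives
\[
|b_I - b_J| \leq |b_I - b(z)| + |b(z) - b(w)| + |b(w) - b_J| \lesssim \|b\|_\B,
\]
and taking real parts yields \eqref{eqn:boundincrements}.

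For the converse, the plan is to construct $b$ explicitly via a dyadic Haar series. Identifying $[0, 1]$ with $\T$ via $\theta \mapsto e^{2\pi i \theta}$, let $\psi_J := \chi_{J^-} - \chi_{J^+}$ denote the Haar function on $J \in \dyad$ (where $J^\pm$ are the two halves of $J$), and let $d_J := (S_{J^+} - S_{J^-})/2$; by hypothesis, $\sup_J |d_J| < \infty$. For each $J$, let $B_J$ be the analytic function on $\D$ with $B_J(0) = 0$ whose real part is the Poisson extension of $\psi_J$; equivalently, the Schwarz integral
\[
B_J(z) = \dfrac{1}{2\pi} \int_J \dfrac{\zeta + z}{\zeta - z} \psi_J(\theta) \, d\theta, \qquad \zeta = e^{2\pi i \theta}.
\]
I would then set $b(z) := S_{[0, 1]} - \sum_{J \in \dyad} d_J B_J(z)$ and verify that (a) the series converges pointwise in $\D$, (b) $b \in \B$, and (c) $\mre b_I = S_I$ for every $I \in \dyad$.

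The mean-zero property of $\psi_J$ on $J$ yields, via subtracting the compensator $\zeta_J/(\zeta_J - z)^2$ (where $\zeta_J$ is the midpoint of $J$) from the integrand of $B_J'(z)$, the key estimate $|B_J'(z)| \lesssim |J|^2 / |1 - \bar\zeta_J z|^3$. Combined with the uniform bound on $|d_J|$, the Bloch property reduces to showing that
\[
(1 - |z|^2) \sum_{J \in \dyad} \dfrac{|J|^2}{|1 - \bar\zeta_J z|^3} \lesssim 1
\]
uniformly in $z \in \D$ --- a standard dyadic Carleson-type sum, handled by grouping intervals $J$ by scale and tangential distance from $z$, with each dyadic regime contributing a bounded amount. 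Claim (a) follows from a similar but less sharp estimate on $|B_J(z)|$ itself, giving pointwise convergence in $\D$ (with at most logarithmic growth near $\T$). For (c), the Fatou-type convergence of Poisson extensions gives $\mre (B_J)_I = \frac{1}{|I|}\int_I \psi_J \, d\theta$, which equals $\psi_J|_I \in \{\pm 1\}$ when $J \supsetneq I$ and $0$ otherwise; this matches the formal Haar expansion $S_{[0, 1]} - S_I = \sum_{J \supsetneq I} d_J \psi_J|_I$, yielding $\mre b_I = S_I$. The main technical hurdle is the uniform Carleson-type sum above: the factor $(1 - |z|^2)$ must precisely offset the accumulation of contributions across dyadic scales, requiring careful scale-by-scale bookkeeping in which both the coarse regime ($|J| \gtrsim 1 - |z|$) and the fine regime ($|J| \lesssim 1 - |z|$) are summed geometrically.
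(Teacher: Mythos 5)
Your forward implication is correct and is essentially the observation already recorded in the paper in the paragraph preceding the lemma (granting \eqref{eqn:blochmartingale}, which is itself quoted from Makarov). The converse, however, has a genuine gap: your construction only ever uses the bound $\sup_J |d_J| < \infty$ on the martingale differences, and this is strictly weaker than \eqref{eqn:boundincrements}. The random walk of Example~\ref{ex:kahane} is a dyadic martingale with $|d_J| \equiv 1$ that is \emph{not} a Bloch martingale; if your argument were correct as written, it would produce a Bloch function inducing the random walk, a contradiction. The hypothesis \eqref{eqn:boundincrements} — boundedness of $|S_I - S_J|$ for \emph{adjacent} intervals of equal length, including pairs with different parents — must enter the proof somewhere, and it never does in yours.

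The concrete technical failure is the termwise estimate $|B_J'(z)| \lesssim |J|^2/|1-\overline{\zeta_J} z|^3$. Writing out the Schwarz integral, $B_J'(z)$ is a fixed multiple of $\frac{1}{a-z} - \frac{2}{m-z} + \frac{1}{c-z}$, where $a$, $m$, $c$ are the endpoints and midpoint of the arc corresponding to $J$; the mean-zero gain you invoke is valid only when $z^*$ is at distance $\gg |J|$ from $J$. When $z^*$ is within distance $1-|z|$ of the point $a$, the term $\frac{1}{a-z}$ has size $(1-|z|)^{-1}$, far larger than the claimed $|J|^{-1}$. Since a single dyadic point $p$ is a shared endpoint of roughly $\log_2\frac{1}{1-|z|}$ intervals $J$ with $|J| \geq 1-|z|$, the sum $\sum_J |d_J|\,|B_J'(z)|$ for $z$ near $p$ can be as large as $(1-|z|)^{-1}\log\frac{1}{1-|z|}$ under the sole assumption $|d_J|\leq 1$; for the random walk near $p=1/2$ it is exactly that large, because $\sum_k d_{J_k}$ over the nested intervals $J_k=[1/2, 1/2+2^{-k})$ telescopes to an unbounded difference of martingale values ($d_{J_k}=1$ for every $k$). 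The required cancellation among these singular parts is governed precisely by differences $S_I - S_{J}$ for adjacent intervals of equal length, i.e., by \eqref{eqn:boundincrements}. A correct proof along your lines must group the Haar terms according to their shared discontinuity points and use \eqref{eqn:boundincrements} to bound the resulting telescoping sums; your scale-by-scale bookkeeping, applied termwise, cannot close this gap. (For what it is worth, the paper does not prove the lemma either; it cites Makarov.)
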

        
        A real dyadic martingale satisfying \eqref{eqn:blochmartingale} is hence known as a \emph{Bloch martingale}. 
        
        The next Example points out how dyadic martingales with uniformly bounded increments need not to satisfy condition \eqref{eqn:blochmartingale}:
    
        \begin{example}[The random walk and Kahane's martingale]
        \label{ex:kahane}
            For any $x$ in $[0, 1]$, let $(\varepsilon_n(x))_{n\in\N}$ be its dyadic expansion, so that 
                \[
                    x=\sum_{n=1}^\infty \varepsilon_n(x)2^{-n}, \qquad \varepsilon_n(x)\in\{0, 1\}.
                \]
                For any $n \in \N$, let $Y_n\colon[0, 1]\to\{-1, 1\}$ be the random variable defined as
                \[
                    Y_n(x):=2\varepsilon_n(x)-1.
                \]
                Then the sequence $(S_n)_{n\in\N}$ defined as
                \[
                    S_n:=\sum_{k=1}^nY_k
                \]
                is a dyadic martingale, which corresponds to the \emph{random walk} on the dyadic tree. It is a dyadic martingale with bounded increments, but it is not a Bloch martingale. Indeed, if $I_k=[1/2-2^{-k}, 1/2)$ and $J_k=[1/2, 1/2+2^{-k})$, then for all $k\geq 2$ we have $S_{I_k}=k-2=-S_{J_k}$, so that \eqref{eqn:blochmartingale} fails.
        
            \begin{figure}[h!]
                \centering
                \resizebox{12cm}{6cm}{
                \begin{tikzpicture}[x=1cm,y=1cm]
                    %%%%%
                    %Definition of variables
                    %%%%%
                    %Distance between points at the depeest level
                    \def \distance {1};
                    %Height of each level;
                    \def \height {1};
                    %Labels (this IS NOT automated)
                    \def \labels {
                    {-4,-2,-2,0,-2,0,0,2,-2,0,0,2,0,2,2,4},
                    {-3,-1,-1,1,-1,1,1,3},
                    {-2,0,0,2},
                    {-1, 1},
                    {0}};
                    %Max. level
                    \gdef \currentlevel {6};
        
                    %Drawing the tree
                    \foreach \i in \labels 
                        {
                            %Update of the level (from the bottom to the top)
                            \xdef \currentlevel {\inteval{\currentlevel - 1}};
                            %Distance between nodes at that level
                            \pgfmathsetmacro{\currentdistance}{\distance*2^(5-\currentlevel)}
        
                            %If we are not in the root
                            \ifthenelse{\currentlevel>1}{
                                %Auxiliar counter (position of the node in its level)
                                \gdef \counter {0};
                                \foreach \j in \i
                                {
                                    %Update of the counter
                                    \xdef \counter {\inteval{\counter + 1}};
                                    %Drawing the node and the connection with its predecessor
                                    \ifthenelse{\isodd{\counter}}{
                                        \draw ($ ({\currentdistance*(\counter-1/2)},{-\currentlevel*\height}) $) node{$\bullet$}node[left=1pt]{\tiny \j};
                                        \draw[opacity=0.3]($ ({\currentdistance*(\counter-1/2)},{-\currentlevel*\height}) $) -- ($ ({\currentdistance*\counter},{-(\currentlevel - 1)*\height}) $) ;
                                    }{
                                        \draw ($ ({\currentdistance*(\counter-1/2)},{-\currentlevel*\height}) $) node{$\bullet$}node[right=1pt]{\tiny \j};
                                        \draw[opacity=0.3]($ ({\currentdistance*(\counter-1/2)},{-\currentlevel*\height}) $) -- ($ ({\currentdistance*(\counter-1)},{-(\currentlevel - 1)*\height}) $) ;
                                    } 
                                }
                            }{
                                \draw (8*\distance,-1*\height) node{$\bullet$}node[above=1pt]{\tiny \i};
                            }
                        }
                \end{tikzpicture}
                }
                \caption{The first generations of the random walk.}
                \label{fig:randomwalk}
            \end{figure}
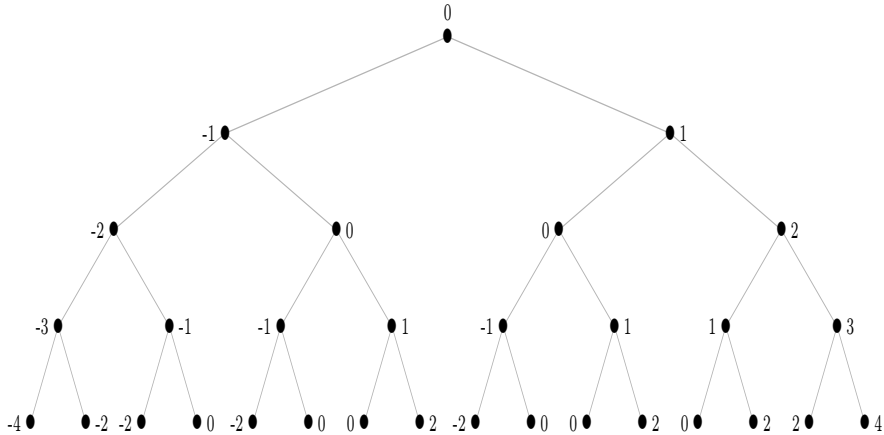
        
            On the other hand, one can obtain a Bloch martingale by switching to $4$-adic expansions. Let $x=\sum_{n=1}^\infty\nu_n(x)4^{-n}$ be the $4$-adic expansion of a point $x$ in $[0, 1]$, $\nu_n(x)\in\{0, 1, 2, 3\}$, and consider the sequence of random variables
                \[
                    W_n(x):=\begin{cases}
                    1\quad&\text{if}\quad\nu_n(x)\in\{0, 3\}\\
                    -1\quad&\text{if}\quad\nu_n(x)\in\{1, 2\}
                    \end{cases}.
                \]
                Then \emph{Kahane's martingale} $K$, $K_n:=\sum_{k=1}^nW_k$ is a Bloch martingale \cite[Prop. 2.2]{makarov}, which can be described as follows: $K_{[0, 1]}=0$, and if $I$ is in $\dyad_k$, then
                \begin{itemize}
                    \item if $k$ is odd, then $K_I=K_{I'}$ where $I'$ is the unique interval in $\dyad_{k-1}$ containing $I$;
                    \item if $k$ is even, then at the left-most and the right-most sub-intervals of $I$ in $\dyad_{k+2}$ the martingale $K$ assumes the value $K_I+1$, whereas at the other two sub-intervals of $I$ in $\dyad_{k+2}$, the martingale $K$ assumes the value $K_I-1$.  
                \end{itemize}
                In particular, the jumps of $K$ between adjacent intervals, as in \eqref{eqn:blochmartingale}, never exceed $2$.
        
            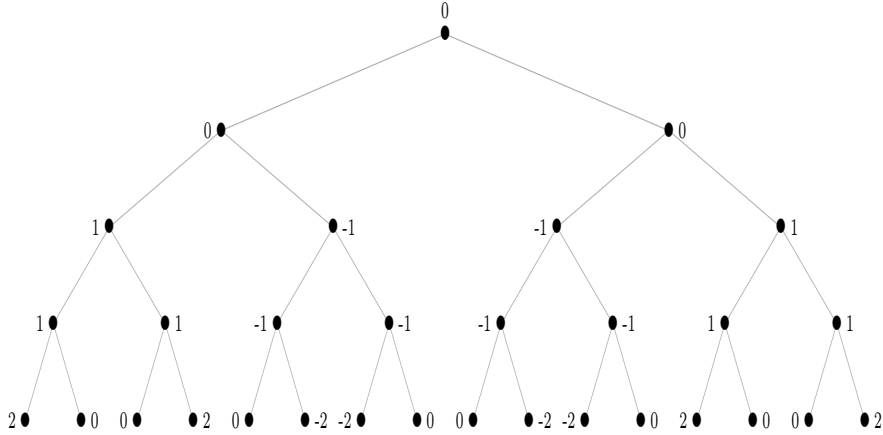
\begin{figure}[h!]
                \centering
                \resizebox{12cm}{6cm}{
                \begin{tikzpicture}[x=1cm,y=1cm]
                    %%%%%
                    %Definition of variables
                    %%%%%
                    %Distance between points at the depeest level
                    \def \distance {1};
                    %Height of each level;
                    \def \height {1};
                    %Labels (this IS NOT automated)
                    \def \labels {
                    {2,0,0,2,0,-2,-2,0,0,-2,-2,0,2,0,0,2},
                    {1,1,-1,-1,-1,-1,1,1},
                    {1,-1,-1,1},
                    {0,0},
                    {0}};
                    %Max. level
                    \gdef \currentlevel {6};
        
                    %Drawing the tree
                    \foreach \i in \labels 
                        {
                            %Update of the level (from the bottom to the top)
                            \xdef \currentlevel {\inteval{\currentlevel - 1}};
                            %Distance between nodes at that level
                            \pgfmathsetmacro{\currentdistance}{\distance*2^(5-\currentlevel)}
        
                            %If we are not in the root
                            \ifthenelse{\currentlevel>1}{
                                %Auxiliar counter (position of the node in its level)
                                \gdef \counter {0};
                                \foreach \j in \i
                                {
                                    %Update of the counter
                                    \xdef \counter {\inteval{\counter + 1}};
                                    %Drawing the node and the connection with its predecessor
                                    \ifthenelse{\isodd{\counter}}{
                                        \draw ($ ({\currentdistance*(\counter-1/2)},{-\currentlevel*\height}) $) node{$\bullet$}node[left=1pt]{\tiny \j};
                                        \draw[opacity=0.3]($ ({\currentdistance*(\counter-1/2)},{-\currentlevel*\height}) $) -- ($ ({\currentdistance*\counter},{-(\currentlevel - 1)*\height}) $) ;
                                    }{
                                        \draw ($ ({\currentdistance*(\counter-1/2)},{-\currentlevel*\height}) $) node{$\bullet$}node[right=1pt]{\tiny \j};
                                        \draw[opacity=0.3]($ ({\currentdistance*(\counter-1/2)},{-\currentlevel*\height}) $) -- ($ ({\currentdistance*(\counter-1)},{-(\currentlevel - 1)*\height}) $) ;
                                    } 
                                }
                            }{
                                \draw (8*\distance,-1*\height) node{$\bullet$}node[above=1pt]{\tiny \i};
                            }
                        }
                \end{tikzpicture}
                }
                \caption{The first generations of the Kahane martingale.}
                \label{fig:kahane}
            \end{figure}
            
        \end{example}
        
        Probabilistic tools turn out to be extremely valuable in the study of the mean growth and boundary behaviour of Bloch functions $b$. Makarov showed in \cite[Corollary 3.2]{makarov} that there exists a universal constant $C$ such that 
            \begin{equation}
            \label{eqn:lil}
                \limsup_{r\to1^-}\frac{\mre b(re^{2\pi i\theta})}{\sqrt{\log\frac{1}{1-r}\log\log\log\frac{1}{1-r}}}\leq C\|b\|_\B, \qquad \textrm{a.e. } \theta \in [0,1].
            \end{equation}
            Moreover, inequality \eqref{eqn:Hedenmalm} can for sufficiently small $a$ be derived from the Azuma--Hoeffding concentration inequality for dyadic martingales with bounded increments. We single out the following formulation of the Azuma inequality, as stated in \cite[Lemma 4.7]{kristianinterpolation}.
            
        \begin{lemma}
        \label{lemma:kristian}
            Let $b$ be a Bloch function of norm one, $\|b\|_\B=1$, and for any $I\in\dyad$, let $l_I(\varepsilon, k)$ be the number of dyadic intervals $J$ contained in $I$, of length $2^{-k}|I|$, and such that $|b_J-b_I|>\varepsilon k$. Then, there exist universal constants $\gamma >0$ and $C > 0$ such that 
            \[
                l_I(\varepsilon, k)\leq C2^k~e^{-\gamma\varepsilon^2k}.
            \]
        \end{lemma}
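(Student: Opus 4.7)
The plan is to apply the Azuma--Hoeffding martingale concentration inequality to the Bloch martingale $(b_J)_{J\subset I}$, viewed as a martingale on the probability space $I$ equipped with normalized Lebesgue measure $dx/|I|$.

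The first step is to verify that the Bloch martingale has uniformly bounded increments in the parent--child direction, not merely between siblings. Indeed, if $I'\in\dyad$ splits as $I' = I_1'\cup I_2'$, then the martingale identity $b_{I'} = \tfrac{1}{2}(b_{I_1'}+b_{I_2'})$ yields $b_{I_j'}-b_{I'} = \pm\tfrac{1}{2}(b_{I_1'}-b_{I_2'})$, and the right-hand side is bounded by $C/2$ by \eqref{eqn:boundincrements} applied to the siblings $I_1'$, $I_2'$ (using $\|b\|_\B=1$).

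Next, for $0\leq n\leq k$, I would define $M_n\colon I\to\R$ by $M_n(x) := b_{I(x,n)}$, where $I(x,n)$ denotes the unique dyadic subinterval of $I$ of length $2^{-n}|I|$ containing $x$. Then $(M_n)_{n=0}^{k}$ is a real martingale on $(I,dx/|I|)$ with $M_0\equiv b_I$ and with increments bounded by $C/2$ thanks to Step~1. The Azuma--Hoeffding inequality therefore gives
\[
\frac{1}{|I|}\Bigl|\bigl\{x\in I : |M_k(x)-b_I|>\varepsilon k\bigr\}\Bigr|
\leq 2\exp\Bigl(-\frac{(\varepsilon k)^2}{2k(C/2)^2}\Bigr) = 2\,e^{-\gamma\varepsilon^2 k},
\]
where $\gamma := 2/C^2$; by inflating the universal Bloch constant in \eqref{eqn:blochmartingale}--\eqref{eqn:boundincrements} if necessary, we may assume $C\geq\sqrt{2}$, so that $\gamma<1$ as required.

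Finally, the level set on the left is precisely the disjoint union of the $l_I(\varepsilon,k)$ dyadic sub-intervals of length $2^{-k}|I|$ contributing to the count, so its Lebesgue measure equals $l_I(\varepsilon,k)\cdot 2^{-k}|I|$. Substituting into the Azuma--Hoeffding bound and rearranging yields the claimed concentration estimate, with constants depending only on the universal Bloch constant. The only substantive point of the argument is the reduction in Step~1 to parent--child increments; once this is in hand the lemma is an immediate invocation of Azuma--Hoeffding, and I do not foresee any real obstacle beyond bookkeeping.
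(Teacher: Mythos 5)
Your argument is the standard Azuma--Hoeffding proof, and it is essentially correct; note that the paper itself does not prove this lemma but quotes it verbatim from \cite[Lemma 4.7]{kristianinterpolation}, so there is no in-paper proof to compare against --- the citation's proof is surely the same reduction you give. Your Step 1 (passing from the sibling bound \eqref{eqn:boundincrements} to the parent--child increment bound $C/2$ via the averaging identity) is exactly the right observation, and the identification of the level set of $M_k$ with the union of the $l_I(\varepsilon,k)$ bad intervals of length $2^{-k}|I|$ is correct. Two small caveats. First, $b_J$ is complex-valued in general, while Azuma--Hoeffding is a statement about real martingales; you should apply it separately to $\mre b_J$ and the imaginary part (each of which has increments bounded by $C/2$) and take a union bound, which only changes the constants $C$ and $\gamma$ and not the form of the estimate. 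Second, your computation yields $l_I(\varepsilon,k)\leq 2\cdot 2^k e^{-\gamma\varepsilon^2k}$ \emph{without} the factor $|I|$, and this discrepancy is not repairable: with the normalization in the statement ($J$ of length $2^{-k}|I|$, so that there are exactly $2^k$ candidates regardless of $|I|$), the count $l_I(\varepsilon,k)$ is scale-invariant --- for instance the Kahane martingale reproduces itself on every dyadic interval of even generation, so $l_I(\varepsilon,k)$ is bounded below independently of $|I|$, and a bound proportional to $|I|$ would be violated for $|I|$ small. The trailing $|I|$ is evidently a normalization artifact carried over from the source (where the natural object is the \emph{measure} of the union of bad intervals, which is $l_I(\varepsilon,k)\,2^{-k}|I|\leq Ce^{-\gamma\varepsilon^2k}|I|$); in the only place the paper uses the lemma, namely the proof of Theorem \ref{theo:necessary}(ii), one has $I=[0,1]$ and $|I|=1$, so your version of the bound is exactly what is needed.
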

        
        We now have all the necessary tools to prove Theorem \ref{theo:necessary}.
        
        \begin{proof}[Proof of Theorem \ref{theo:necessary}] \,
            \begin{description}
            	\item[(i)] By conformal invariance, it suffices to prove \eqref{eqn:traceblochhede} for $z=0$, uniformly for all Bloch functions of norm less than or equal to $1$. To do so, observe that for $r \in (0, 1)$ and $\lambda>0$, the function
                	\[
                	   g(z):= e^\frac{\lambda|b(z)-b(0)|^2}{-\log(1-r^2)}, \qquad |z|\leq r,
                	\]
            	    is subharmonic. Therefore, if $G_r$ is the Poisson integral of the restriction of $g$ to $\{|z|=r\}$, we have that $G(z)\geq g(z)$ for all $|z|\leq r$, and since $Z = (z_n)_{n\in\N}$ satisfies the Carleson embedding condition \eqref{eqn:cmPoisson},
                	\begin{align*}
                		\sum_{|z_n|<r}e^{\frac{\lambda|b(z_n)-b(0)|^2}{-\log(1-r^2)}}(1-|z_n|^2)
                		&= \int_{|z|<r}g(z)\,d\mu_Z(z) \\
                		&\leq\int_{|z|<r}G(z)\,d\mu_Z(z)
                		\lesssim \int_0^{2\pi}|g(re^{i\theta})|^2\,d\theta.
                	\end{align*}
            	
            	The conclusion then follows from \eqref{eqn:Hedenmalm} upon choosing $\lambda$ sufficiently small.
                
                \item[(ii)] Again, by conformal invariance, it suffices to consider \eqref{eqn:tracebloch} for $z=0$, as long as the estimates we prove are uniform in Bloch functions $b$ such that $\|b\|_{\B}\leq1$. Without loss of generality we may also assume that $b(0)=0$. We are going to use the dyadic decomposition of the unit disc. Since $Z$ satisfies the Carleson condition, $Z$ is a finite union of weakly separated sequences\cite[Proposition 9.11]{AglerMcCarthyBook}. By possibly decomposing each weakly separated subsequence into further finite unions, we may assume that each dyadic top-half contains at most one point of the sequence. Let $G_k = \{T(I) \, : \, I \in \dyad_k\}$ denote the top halves of generation $k$.
            
                Since $Z$ satisfies the Carleson condition, $M = \sum_n (1 - |z_n|^2) < \infty$, and thus 
                    \[ 
                        \#\{n\,:\,1-|z_n|^2 \geq t\} \leq \frac{M}{t}.
                    \]
                    Therefore, it is sufficient to estimate
                    \begin{multline*}
                        \#\left\{n\,:\, t>(1-|z_n|^2)\,,\,e^\frac{\lambda|b(z_n)|^2}{-\log(1-|z_n|^2)}(1-|z_n|^2) > t \right\} \\
                        = \sum_{k\geq\log_2\frac{1}{t}}\#\left\{z_n\in G_k\,:\, |b(z_n)|>k\log2\sqrt{\frac{1}{\lambda}\left(1+\frac{\log~t}{k\log2}\right)}\right\} \\
                        \lesssim \sum_{k\geq\log_2\frac{1}{t}}2^k~e^{-\frac{\gamma k\log^22}{\lambda}\left(1+\frac{\log~t}{k\log2}\right)} = t^{-\frac{\gamma}{\lambda}~\log2}\sum_{k\geq\log_2\frac{1}{t}}\, 2^{k\left(1-\frac{\gamma}{\lambda}~\log2\right)} \simeq \frac{1}{t},
                    \end{multline*}
                    where we used Lemma~\ref{lemma:kristian} and assumed that $\lambda$ was sufficiently small,  $\lambda < \gamma \log 2$.
            
                \item[(iii)] We show that the sum in \eqref{eqn:counter} diverges at $z=0$ for the Bloch function $K$ corresponding to the Kahane martingale and a suitably chosen interpolating sequence $(z_n)_{n\in\N}$. It is important for the counterexample that $K$ satisfies the converse inequality to \eqref{eqn:lil}, see \cite[Corollary~3.5]{makarov}. Namely, there exists a constant $c > 0$ such that
                    \begin{equation}
                    \label{eqn:revlil}
                        \limsup_{r\to1^-}\frac{\mre K(re^{2\pi i\theta})}{\sqrt{\log\frac{1}{1-r}\log\log\log\frac{1}{1-r}}}\geq c, \qquad \textrm{a.e. } \theta \in [0,1].
                    \end{equation}
            
                Let $(s_j)_{j\in\N}$ be an increasing sequence of positive numbers such that $s_j/j \to \infty$ as $j\to\infty$. We construct the desired interpolating sequence recursively. We start the process by selecting points $z_1^1, z_2^1, \dots, z^1_{l_0}$ for which the corresponding Carleson squares $(S(I_{z_i^1}))_{i=1}^{l_0}$ are pairwise disjoint. Because of \eqref{eqn:revlil},  we may choose the points such that
                    \[
                        \frac{|K(z^1_i)|^2}{\log\frac{1}{1-|z^1_i|^2}}\geq \frac{c}{2} \log\log\log\frac{1}{1-|z^1_i|}\geq s_1, \qquad i=1, \dots, l_0,
                    \]
                    and
                    \[
                        \frac{1}{4}\leq\sum_{i=1}^{l_0}(1-|z^1_i|^2)\leq \frac{1}{2}.
                    \]
                    In this construction, we will refer to the points $\Lambda_1 = \{z_1^1, z_2^1, \dots, z^1_{l_0}\}$  as the points of the first generation.
            
                Recursively, given a point $w \in \Lambda_j$ of generation $j$, we choose its successors in generation $j+1$, $z_{1, w}^{j+1}, \dots, z_{l_w, w}^{j+1}$, so that $I_{z_{i, w}^{j+1}} \subset I_w$, $I_{z_{i, w}^{j+1}} \cap I_{z_{k, w}^{j+1}} = \emptyset$ if $i \neq k$, and such that 
                    \begin{equation}
                    	\label{eqn:kahanebig}
                    	\frac{|K(z^{j+1}_{i, w})|^2}{\log\frac{1}{1-|z^{j+1}_{i, w}|^2}}\geq \frac{c}{2}\log\log\log\frac{1}{1-|z^{j+1}_{i, w}|}\geq s_{j+1}, \qquad i=1, \dots, l_w,
                    \end{equation}
                    and
                    \begin{equation}
                    \label{eqn:carlesongeometric}
                        \frac{1}{4}(1-|w|^2)\leq\sum_{i=1}^{l_w}(1-|z^{j+1}_{i, w}|^2)\leq \frac{1}{2}(1-|w|^2).
                    \end{equation}
                    Let $\Lambda_{j+1}$ be the set of all successors to points $w \in \Lambda_j$.
            
                We consider the sequence $Z = (z_n)_{n\in\N}:=\bigcup_{j=1}^\infty \Lambda_j$. Since the Carleson squares associated with points of the same generation are disjoint, $Z$ is weakly separated, and the upper bound in \eqref{eqn:carlesongeometric} implies that $\mu_Z$ satisfies the Carleson measure condition. Thus $Z$ is interpolating. On the other hand, the lower bound in \eqref{eqn:carlesongeometric} yields
                    \[
                        \sum_{z \in \Lambda_j }(1-|z|^2)\geq 4^{-j}, \qquad j \geq 1,
                    \]
                    and therefore
                    \[
                        \sum_{n\in\N} e^\frac{\lambda|K(z_n)-K(0)|^2}{-\log(1-|z_n|^2)}(1-|z_n|^2) \geq \sum_{j=1}^\infty e^{\lambda s_j}\sum_{z \in \Lambda_j}(1-|z|^2) = \infty,
                    \]
                    for every $\lambda > 0$. 
            \end{description}
        \end{proof}

\bibliographystyle{amsplain} 
\bibliography{BekolleBonamiBloch} 

\end{document}